\numberwithin{equation}{section}
\newcommand{\be}{\begin{equation}}
\newcommand{\ee}{\end{equation}}
\newcommand{\ben}{\begin{eqnarray*}}
\newcommand{\enn}{\end{eqnarray*}}
\newtheorem{theorem}{\textbf Theorem}[section]
\newtheorem{lemma}{\textbf Lemma}[section]
 \numberwithin{equation}{section}
\newtheorem{remark}{Remark}[section]
\renewcommand{\theequation}{\arabic{section}.\arabic{equation}}
\begin{document}

\title{\textbf{
Local
Minimizers in Second Order Mean-field Games Systems with Choquard Coupling}}
\author{
Fanze Kong \thanks{Department of Applied Mathematics, University of Washington, Seattle, WA 98195, USA; fzkong@uw.edu}, 
Yonghui Tong\thanks{Center for Mathematical Sciences, Wuhan University of Technology, Wuhan 430070, China; myyhtong@whut.edu.cn} and
Xiaoyu Zeng\thanks{Center for Mathematical Sciences, Wuhan University of Technology, Wuhan 430070, China; xyzeng@whut.edu.cn}
}
\date{\today}
\maketitle
\abstract{

Mean-field Games systems (MFGs) serve as paradigms to describe the games among a huge number of players.  In this paper, we consider the ergodic Mean-field Games systems in the bounded domain with Neumann boundary conditions and the decreasing Choquard coupling. Our results provide sufficient conditions for the existence of solutions to MFGs with Choquard-type coupling. 
 More specifically, in the mass-subcritical and critical regimes, the solutions are characterized as global minimizers of the associated energy functional. In the case of mass supercritical exponents, up to the Sobolev critical threshold, the solutions correspond to local minimizers.  The proof is based on variational methods, in which the regularization approximation, convex duality argument, elliptic regularity and Hardy-Littlewood-Sobolev inequality are comprehensively employed.

\medskip

{\sc Keywords}: Mean-field Games, Variational Structures, Choquard Coupling, Local Minimizers, Supercritical mass exponents, Hardy-Littlewood-Sobolev inequality

\maketitle



\section{Introduction}\label{intro1}
This paper is devoted to the study of the stationary Mean-field Games systems in the bounded domain $\Omega\subset \mathbb R^n$, $n\geq 1$ with a smooth boundary $\partial\Omega$, which are
 \begin{align}\label{goalmodel}
\left\{\begin{array}{ll}
-\Delta u+C_H|\nabla u|^{\gamma}+\lambda=f(m),\  &\text{in} \ \Omega,\\
\vspace{0.5ex}
\Delta m+C_{H}\gamma\nabla\cdot (m|\nabla u|^{\gamma-2}\nabla u)=0,\  &\text{in} \ \Omega,\\
\vspace{0.5ex}
\frac{\partial u}{\partial\boldsymbol{n}}=0,\ \frac{\partial m}{\partial\boldsymbol{n}}+ C_{H}\gamma m|\nabla u|^{\gamma-2}\nabla u \cdot\boldsymbol{n} =0, \  &\text{on} \ \partial\Omega,\\
\vspace{0.5ex}
 \int_{\Omega}m\, dx=1,\ \ \int_{\Omega}u\, dx=0,
\end{array}
\right.
\end{align}
where triple $(m,u,\lambda)$ denotes a solution, $\gamma>1$, and $C_H>0$ is a constant.  Here $\textbf{n}$ is the unit outer normal and 
$f(m)$ represents the coupling and the decreasing coupling typically has the following two forms
\begin{align}\label{formfmtaken}
f(m)=-C_fm^{p}\text{ or } - C_{f} \left( K_{\alpha}\ast m \right),
\end{align}
where $p>1$, $C_f>0$ is a constant and $K_{\alpha}$ is defined as the following Riesz potential
\begin{align}\label{MFG-K}
K_{\alpha}=\frac{1}{|x|^{n-\alpha}}\text{ with }0<\alpha<n.
\end{align}

In a recent work, Cirant, Cosenza and Verzini \cite{Cirant2024CVPDE} showed the existence of local minimizers to \eqref{goalmodel} with the coupling $f(m)$ taken as the former form shown in \eqref{formfmtaken} under the mass supercritical exponent up to Sobolev critical exponent cases by using the variational method.  The main goal in this paper is to extend their results in the case of the nonlocal coupling. To achieve this, we need employ the regularization approximation, convex duality argument, elliptic regularity and Hardy-Littlewood-Sobolev inequality comprehensively .  More specifically, we consider $f(m)=-C_f(K_{\alpha}\ast m)$ shown in \eqref{formfmtaken} and study the existence of local minimizers to \eqref{goalmodel} with $\alpha\in[n-2\gamma',n-\gamma']$, where $\gamma'=\frac{\gamma}{\gamma-1}$ is the conjugate exponent of $\gamma$. 
\subsection{Mean-field Games Systems}\label{MFGIntroduction}
In 2007, Huang et al. \cite{Huang} and Lasry et al. \cite{Lasry} independently proposed a class of 
forward-backward parabolic PDE systems to describe the differential games among a huge number of agents.  The corresponding theories have potential applications in the field of economics, finance, management, etc. 
Now,  let us briefly review the formulation of ergodic Mean-field Games systems.  First of all, consider the trajectory of the $i$-th player as follows
\begin{align}\label{game-process-dXti}
dX_t^i=-\nu^i_t dt+\sqrt{2}dB_t^i, \ \ X_0^i=x^i\in\Omega,~i=1,\cdots,N,
\end{align}
where $\Omega$ could be the bounded domain or the whole space, $x^i$ denotes the initial condition, $\nu^i_t$ is the velocity and $B_t^i$ is the Brownian motion.   Assume Brownian motions $B_t^i$ are independent and all agents are homogeneous, then we have $X_t^i$, $i=1,\cdots,N$ follow the same process and drop "$i$" in \eqref{game-process-dXti} for the simplicity of notation.  The goal of each player is to minimize the following long-run cost
\begin{align}\label{longsenseexpectation}
J(\nu_t):=\limsup_{T\rightarrow+\infty}\inf_{\nu}\frac{1}{T}\int_0^T[L(\nu_t)+f(m(X_t))] dt,
\end{align}
where the Lagrangian function $L$ is the cost of moving with velocity $v_{t}$ and $f(m(X_t))$ is the cost of being at position $X_{t}$. By employing some standard dynamic programming principle, the following ergodic problem is formulated 
 \begin{equation}\label{MFG-SS}
\left\{
\begin{array}{ll}
-\Delta u+H(\nabla u)+\lambda=f(m) , &x \in \Omega,     \\
 \Delta m+\nabla\cdot (m\nabla H(\nabla u))=0,&x \in \Omega,\\
\int_{\Omega} mdx=M>0,
\end{array}
\right.
\end{equation}
where the triple $(m,u,\lambda)$ denotes the solution and $f$ is the coupling.  Here Hamiltonian $H$ is the Legendre duality of $L$, i.e. $H(\boldsymbol{p})=\sup_{\boldsymbol{v}\in \mathbf{R^n}}(\boldsymbol{v}\cdot \boldsymbol{p}-L(\boldsymbol{v}))$. 
 $H$ is in  general assumed to be convex. There is a vast literature associated with problem \eqref{MFG-SS}, see \cite{cirant2016stationary, bernardini2022mass, bernardini2023ergodic, meszaros2015variational, cesaroni2018concentration, CesaroniCirant2019, Francisco2018siam} and the references therein.

We also mention that the stationary Mean-field Games systems can be trivialized to nonlinear $\gamma'$-Laplacian Schr\"{o}dinger equations when $H$ is chosen as 
\begin{align}\label{Hamiltonian}
H(\boldsymbol{p}):=C_{H}\vert \boldsymbol{p}\vert^{\gamma}, \  \ \exists \gamma>1, \ \ C_{H}>0.
\end{align}
Indeed, Fokker-Planck equation in (\ref{MFG-SS}) can be reduced into the following form:
\begin{align}\label{FPeqpartially}
\nabla m+C_H\gamma m|\nabla u|^{\gamma-2}\nabla u=0~~\text{a.e.,}~~x\in \Omega.
\end{align}
Similarly as shown \cite{cirant2024critical}, we define $v:=m^{\frac{1}{\gamma'}}$ and obtain from (\ref{FPeqpartially}) and the $u$-equation in (\ref{MFG-SS}) that
\begin{align}\label{nonlinear-Schrodinger}
\left\{\begin{array}{ll}
-\mu\Delta_{\gamma'} v+[f(v^{\gamma'})-\lambda]v^{\gamma'-1}=0,~x\in\Omega,\\
\int_{\Omega} v^{\gamma'}\,dx=M,~v>0,~\mu=\big(\frac{\gamma'-1}{C_H}\big)^{\gamma'-1},
\end{array}
\right.
\end{align}
where $\Delta_{\gamma'}$ is the $\gamma'$-Laplacian and given by $\Delta_{\gamma'}v=\nabla\cdot(|\nabla v|^{\gamma'-2}\nabla v)$. Regarding the Schr\"{o}dinger equation involving $\gamma'$-Laplacian, it is well-known that with the consideration of Neumann boundary conditions, (\ref{nonlinear-Schrodinger}) admits the following variational structure:
\begin{align}\label{variation-schrodinger}
\mathcal{I}(v):=\int_{\Omega}\frac{\mu}{\gamma'}|\nabla v|^{\gamma'}+F(v)\, dx,
\end{align}
    where $F(v)$ denotes the anti-derivative of $f(v^{\gamma'})v^{\gamma'-1}.$  In particular, when $\gamma'=2$ and  $f(v^2)=-K_{\alpha}\ast v^{2}$ in (\ref{nonlinear-Schrodinger}), the equation becomes the standard nonlinear Schr\"{o}dinger equation with the Hartree-type aggregation term. 




Inspired by the relation between Schr\"{o}dinger equations and Mean-field Games systems discussed above, Cirant et al. \cite{cirant2024critical} studied the existence and blow-up behaviors of ground states to \eqref{MFG-SS} when $f$ has the former form given by \eqref{formfmtaken} under the mass critical exponent case.  Motivated by this work and \cite{bernardini2023ergodic}, Kong et al. \cite{KongFanze2024} proved the existence and asymptotic behaviors of ground states to \eqref{goalmodel} when $\alpha=n-\gamma'.$  In particular, Bernardini and Cesaroni \cite{bernardini2022mass} studied the mass subcritical exponent case with $\alpha\in(n-\gamma',n)$ via the variational method. We utilize some idea mentioned in \cite[Section 1]{bernardini2023ergodic} to study the Sobolev critical case (i.e. $\alpha=n-2\gamma'$) with non-local coupling. It is well-known that the solutions to system \eqref{goalmodel} can be obtained by finding the critical points of the following variational functional
\begin{align}\label{energy-dual}
\mathcal E(m,\boldsymbol{w}):=
\int_{\Omega} \left[mL\bigg(-\frac{\boldsymbol{w}}{m}\bigg)+F(m)\right]\, dx, \ \  (m,\boldsymbol{w}) \in  \mathcal{A},
\end{align}
where Lagrangian $L$ is defined by
\begin{align}\label{general-Lagrangian}
L\bigg(-\frac{\boldsymbol{w}}{m}\bigg):=\left\{\begin{array}{ll}
\sup\limits_{\boldsymbol{p}\in\mathbb R^n}\big(-\frac{\boldsymbol{p}\cdot \boldsymbol{w}}{m}-H(\boldsymbol{p})\big),&m>0,\\
0,&(m,\boldsymbol{w})=(0,\boldsymbol{0}),\\
+\infty,&\text{otherwise},
\end{array}
\right.
\end{align}
$F(m):=-\frac{1}{2}C_{f}(K_{\alpha}\ast m)m$ for $m\geq 0$ and $F(m)=0$ for $m\leq 0$.  Here the admissible set $\mathcal{A}$ is given by
\begin{align}\label{constraint-set-K}
\mathcal{A}:=\Big\{&(m,\boldsymbol{w})\in (L^{1}(\Omega)\cap W^{1,\beta}(\Omega))\times L^{\frac{\gamma' q_{\alpha}}{\gamma'+q_{\alpha}-1}}(\Omega)\nonumber\\
~&\text{s. t.}\int_{\Omega}\nabla m\cdot\nabla\varphi\,dx=\int_{\Omega}\boldsymbol{w}\cdot\nabla\varphi\, dx,\forall \varphi\in C_c^{\infty}(\Omega),\nonumber
~\int_{\Omega}m\,dx=1>0,~m\geq 0\text{~a.e.~}\Big\}, \\  & \text{with}\  \frac{1}{\beta}:=\frac{1}{\gamma'}+\frac{1}{\gamma q_{\alpha}}\ \text{and } \ q_{\alpha}=\frac{2n}{n+\alpha}. 
\end{align}
As shown in \cite{bernardini2023ergodic},  there are two critical exponents:
\begin{equation}\label{criticalexponents}
   \alpha_{mc}=\left\{\begin{array}{ll}
   n-\gamma',& n>\gamma', ~\text{(mass- critical)}, \\
   0, & n\leq \gamma',
\end{array}
\right.
\end{equation}
~\text{and}~
\begin{equation}
\alpha_{sc}=\left\{\begin{array}{ll}
n-2\gamma',& n>2\gamma',~ \text{(Sobolev- critical)},\\
0,&n\leq 2\gamma'.
\end{array}
\right.
\end{equation}

In fact, $\alpha_{mc}$ is the mass-critical exponent arising from the Gagliardo-Nirenberg inequality involving the $L^{1}$ norm of $m$ and $\alpha_{sc}$ defined as the Sobolev-critical exponent, is related to the Hardy-Littlewood-Sobolev inequality.  By using the notations discussed above, the range of exponent $\alpha$ in \eqref{goalmodel} can be classified as follows:
\begin{enumerate}
\setcounter{enumi}{0}
\item[$(H1)$]  Sobolev-supercritical:  $0<\alpha< \alpha_{sc}$;

\item[$(H2)$]  Sobolev-critical:  $\alpha=\alpha_{sc}$;

\item[$(H3)$]  mass-supercritical up to  Sobolev-subcritical: $\alpha_{sc}<\alpha<\alpha_{mc}$;

\item[$(H4)$] mass-critical: $\alpha=\alpha_{mc}$;
    
\item[$(H5)$] mass-subcritical: $\alpha_{mc}<\alpha<n$.
\end{enumerate}
Our goal in this paper is to prove the existence of solutions to (\ref{goalmodel}) with $f(m)=-C_f(K_{\alpha}\ast m)$ and they correspond to local minimizers of $\mathcal{E}$ given by (\ref{energy-dual}). Our contributions are to study the cases (H2) and (H3)  when the decreasing coupling is Choquard type. Plenty of work has been devoted to the Choquard-type Schr\"odinger equations {such as the existence, uniqueness, symmetry and multiplicity of the solutions and normalized solutions, see e.g. \cite{Schaftingen1,Schaftingen2,Tanaka1,Tanaka2,Lieb,Lions,Nils,Lions2} and the references therein}.  We mention that there are also some relevant  results for the existence of local minimizers in nonlinear Schr\"odinger equations with local couplings \cite{Noris2014anal.pde,Noris2019nonlinearity,Verzini2017cvpde}. 

The main results in this paper are summarized as follows:
\begin{theorem}\label{thm11-optimal}
Assume $0<\alpha<n$ and either
\begin{enumerate}
\setcounter{enumi}{0}
\item[$(\romannumeral 1)$]
      $\alpha_{mc}<\alpha<n$,  or

\item[$(\romannumeral 2)$]
	$\alpha=\alpha_{mc}>0$ and $C_{f}<C_{TV1}$,  or

\item[$(\romannumeral 3)$]
      $\alpha_{sc}<\alpha<\alpha_{mc}$ and $C_{f}< C_{TV2}$,  or
    
\item[$(\romannumeral 4)$]
      $\alpha=\alpha_{sc}>0$ and $C_{f}< C_{TV3}$,
\end{enumerate}
where $C_{TV1}$, $C_{TV2}$ and $C_{TV3}$ are given by \eqref{globalminregularized_1}, \eqref{lobalcriticlemma_1} and  \eqref{lobalcriticlemma_2} respectively. Then system \eqref{goalmodel} admits a solution $(u, m, \lambda)\in C^{2}(\overline{\Omega})\times W^{1,p}(\Omega)\times\mathbb R$ with $m>0$ for all $p\geq 1$. In addition, for case $(\romannumeral 1)$ and case $(\romannumeral 2)$, the pair $(m,\boldsymbol{w})$ with $\boldsymbol{w}=- C_{H}\gamma m|\nabla u|^{\gamma-2}\nabla u$ is a global minimizer of $\mathcal{E}$ in $\mathcal{A}$, while in case $(\romannumeral 3)$ and case $(\romannumeral 4)$, the pair $(m,\boldsymbol{w})$ is a local minimizer of $\mathcal{E}$ in $\mathcal{B}_{\bar{r}}$ with $\mathcal{B}_{\bar{r}}:=\bigg\{(m,\boldsymbol{w})\in \mathcal{A}:\|m\|_{L^{q_{\alpha}}(\Omega)}\leq \bar{r}\bigg\}$ for some $\bar{r}>0$.
\end{theorem}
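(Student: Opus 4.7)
The overall plan is to realize the announced solution $(u,m,\lambda)$ as the Euler--Lagrange system coming from a minimizer of $\mathcal{E}$: on the whole set $\mathcal{A}$ in cases $(\romannumeral 1)$--$(\romannumeral 2)$ (global minimization) and on the $L^{q_\alpha}$-ball $\mathcal{B}_{\bar r}$ in cases $(\romannumeral 3)$--$(\romannumeral 4)$ (local minimization). The first technical step I would take is to regularize: replace the Riesz kernel $K_\alpha$ by a mollification $K_{\alpha,\epsilon}=K_\alpha\ast\rho_\epsilon$, obtaining a smooth coupling $f_\epsilon(m)=-C_f(K_{\alpha,\epsilon}\ast m)$ and a perturbed energy $\mathcal{E}_\epsilon$. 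The advantage is that $\mathcal{E}_\epsilon$ is differentiable, the Fokker--Planck side of the MFG system has $C^\infty$ forcing, and passing to the limit $\epsilon\to 0$ decouples compactness from the measurability/integrability subtleties of $K_\alpha$.

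The analytic workhorse for coercivity is the Hardy--Littlewood--Sobolev inequality
\begin{equation*}
\int_\Omega (K_\alpha\ast m)\,m\,dx\;\leq\; C_{HLS}\,\|m\|_{L^{q_\alpha}(\Omega)}^{2},\qquad q_\alpha=\tfrac{2n}{n+\alpha},
\end{equation*}
combined with the Gagliardo--Nirenberg inequality bounding $\|m\|_{L^{q_\alpha}}$ by an interpolation between $\|\nabla m\|_{L^\beta}$ and $\|m\|_{L^1}=1$. In cases $(\romannumeral 1)$ and $(\romannumeral 2)$ the interpolation exponent is strictly below the coercivity threshold (for any $C_f>0$ in $(\romannumeral 1)$, and for $C_f<C_{TV1}$ in $(\romannumeral 2)$), so $\mathcal{E}_\epsilon$ is bounded below on $\mathcal{A}$ and the direct method applies: a minimizing sequence is bounded in $W^{1,\beta}(\Omega)\times L^{\gamma'q_\alpha/(\gamma'+q_\alpha-1)}(\Omega)$, the convex Lagrangian term $mL(-\boldsymbol{w}/m)$ is weakly lower semicontinuous, and the smoothed Choquard term passes to the limit by compact embedding. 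In cases $(\romannumeral 3)$ and $(\romannumeral 4)$ the interpolation exponent sits exactly at, or past, the coercivity threshold, hence $\mathcal{E}_\epsilon$ is unbounded below on $\mathcal{A}$. The remedy is to work inside $\mathcal{B}_{\bar r}$: the extra $L^{q_\alpha}$ constraint kills the concentration mechanism and renders $\mathcal{E}_\epsilon$ bounded below on $\mathcal{B}_{\bar r}$ provided $C_f$ is below the thresholds $C_{TV2}$, $C_{TV3}$ arising from balancing the HLS and Gagliardo--Nirenberg constants. The radius $\bar r$ and these thresholds must then be chosen so that
\[
\inf_{\mathcal{B}_{\bar r}}\mathcal{E}_\epsilon \;<\; \inf_{\partial\mathcal{B}_{\bar r}}\mathcal{E}_\epsilon,
\]
which makes the minimizer interior and hence a true critical point of the unconstrained problem.

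Having produced a minimizer $(m_\epsilon,\boldsymbol{w}_\epsilon)$ of $\mathcal{E}_\epsilon$, I would next run the convex-duality argument along the lines of the works of Bernardini--Cesaroni, Cirant--Cosenza--Verzini and Kong et al. cited in the introduction. The primal problem is minimization with the linear continuity constraint $-\Delta m+\nabla\cdot\boldsymbol{w}=0$ together with Neumann data, and its Fenchel dual is an HJB eigenvalue problem; strong duality yields $u_\epsilon$ and $\lambda_\epsilon$ with $\boldsymbol{w}_\epsilon=-C_H\gamma m_\epsilon|\nabla u_\epsilon|^{\gamma-2}\nabla u_\epsilon$, producing the regularized MFG system. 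Uniform $L^{q_\alpha}$ bounds on $m_\epsilon$ coming from the coercivity estimate, together with HLS on $f_\epsilon(m_\epsilon)$, give uniform $L^p$ bounds on the right-hand side of the HJB equation. Standard quasilinear elliptic regularity then upgrades $u_\epsilon$ to $C^{2,\sigma}(\overline{\Omega})$ uniformly in $\epsilon$, a bootstrap on the Fokker--Planck side gives $m_\epsilon\in W^{1,p}$ for every $p$, and compactness lets me pass to the limit $\epsilon\to 0$ to recover a classical solution of \eqref{goalmodel}. Positivity of $m$ finally follows from the strong maximum principle / Harnack inequality applied to the Fokker--Planck equation with the uniformly bounded drift $C_H\gamma|\nabla u|^{\gamma-2}\nabla u$.

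The main obstacle is the supercritical regime $(\romannumeral 3)$--$(\romannumeral 4)$: establishing that the constrained minimum is attained strictly inside $\mathcal{B}_{\bar r}$ rather than on the sphere $\{\|m\|_{L^{q_\alpha}}=\bar r\}$. This requires exhibiting an explicit competitor (for instance a small $W^{1,\beta}$-perturbation of the uniform density $m\equiv 1/|\Omega|$, with an appropriately scaled $\boldsymbol{w}$) whose energy is strictly smaller than the lower bound on $\partial\mathcal{B}_{\bar r}$ that one derives from HLS plus Gagliardo--Nirenberg; the constants $C_{TV2}$ and $C_{TV3}$ emerge precisely from optimizing this comparison. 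The Sobolev-critical endpoint $\alpha=\alpha_{sc}$ is the most delicate: HLS is scale invariant at this exponent, so minimizing sequences may concentrate and one must either invoke a concentration--compactness argument or use the fact that $\Omega$ is bounded to rule out mass loss at the boundary. Tracking the sharp constants through the chain of inequalities and verifying strict non-attainment on $\partial\mathcal{B}_{\bar r}$ is the technical heart of the proof.
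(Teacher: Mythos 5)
Your overall architecture coincides with the paper's: mollify the Choquard coupling, obtain global minimizers of the regularized energy in cases $(\romannumeral 1)$--$(\romannumeral 2)$ by coercivity (Hardy--Littlewood--Sobolev plus the Fokker--Planck/interpolation estimate of Lemma \ref{lemma21-crucial}), obtain interior local minimizers in $\mathcal{B}_{\bar r}$ in cases $(\romannumeral 3)$--$(\romannumeral 4)$ by comparing $e_{\bar r}$ with the infimum on the sphere $\mathcal{U}_{\bar r}$ via the competitor $(m,\boldsymbol{w})=(1,\boldsymbol{0})$, recover the MFG system by the convex duality/linearization argument, and pass to the limit $\varepsilon\to0$. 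Up to that point your plan is the paper's proof.

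There is, however, a genuine gap in your limit-passage step. You assert that the uniform $L^{q_\alpha}$ bound on $m_\varepsilon$ together with HLS gives uniform $L^p$ bounds \emph{for all} $p$ on the right-hand side of the Hamilton--Jacobi equation, whence uniform $C^{2,\sigma}$ bounds on $u_\varepsilon$. This chain does not close: from $\|m_\varepsilon\|_{L^{q_\alpha}}\leq C$ HLS only yields $K_\alpha\ast m_\varepsilon$ bounded in $L^{2n/(n-\alpha)}$, a single finite exponent, while the Lipschitz and Schauder estimates of Lemma \ref{HJBexists} (and hence the $W^{1,p}$ bootstrap for $m_\varepsilon$) require $K_\alpha\ast m_\varepsilon$ in $L^\infty\cap C^{0,\theta}$, i.e.\ $m_\varepsilon$ in $L^t$ with $t>n/\alpha$. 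The missing ingredient is an a priori uniform $L^\infty$ bound on $m_\varepsilon$, which the paper establishes in Lemma \ref{mepsinfty}: for $\alpha_{sc}<\alpha<n$ by a rescaling/blow-up argument around the maximum point of $m_\varepsilon$ (with a boundary-straightening reflection when the blow-up point approaches $\partial\Omega$), reaching a contradiction with the uniform $L^{q_\alpha}$ bound; and at $\alpha=\alpha_{sc}$ by exploiting the smallness of $C_f$ so that the drift $C_H\gamma|\nabla u_\varepsilon|^{\gamma-2}\nabla u_\varepsilon$ has small $L^n$ norm (Lemmas \ref{nablabarrier_1} and \ref{b-infty}), which starts an integrability bootstrap $L^{q_{\alpha_{sc}}}\to L^{p^\ast}\to\cdots\to L^\infty$. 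Your proposal names neither mechanism; ``standard quasilinear elliptic regularity'' and a vague appeal to concentration--compactness do not substitute for this step, which is the technical core of the passage from the regularized system to \eqref{goalmodel}. A second, more minor omission: to conclude that the limit pair is a minimizer of the unregularized $\mathcal{E}$ (not merely a solution of \eqref{goalmodel}) the paper runs a short $\Gamma$-convergence argument for $\mathcal{E}_\varepsilon\to\mathcal{E}$, which your plan leaves implicit.
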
 

Theorem \ref{thm11-optimal} implies that solutions of \eqref{goalmodel} exist even under the mass-critical, mass-supercritical, and up to the Sobolev-critical cases. In particular, such solutions correspond to the minimizers of $\mathcal{E}$ in $\mathcal{A}$. As stated in Theorem \ref{thm11-optimal}, the mass-critical exponent $\alpha_{mc}$ is a threshold to obtain a global minimizer. Namely, for the mass-subcritical case, i.e. $\alpha>\alpha_{mc}$, $\mathcal{E}$ has a global minimizer, while for the case of $\alpha<\alpha_{mc}$, the energy $\mathcal{E}$ is not bounded from below and there only exist local minimizers in $\mathcal{A}$ under the further assumption that $C_{f}$ is small enough.  

The rest of this paper is organized as follows. In Section \ref{preliminaries}, we give some preliminary results that include the existence and regularity properties of Hamilton-Jacobi equations and Fokker-Planck equations.  Section \ref{sect3-optimal} is dedicated to the proof of Theorem \ref{thm11-optimal}. Throughout the paper, for simplicity we may assume that $\vert\Omega\vert=1$, 
due to our arguments still hold for the case of $\vert\Omega\vert\neq 1$.  Without confusing readers, $C>0$ is chosen as a generic constant, which may vary line by line. 

\medskip

\section{Preliminaries}\label{preliminaries}
In this section, we collect some preliminary results including estimates involving Riesz potential, existence and regularity properties of solutions to the Hamilton-Jacobi and Fokker-Planck equations.

\subsection{Estimates of Riesz Potential}\label{appendixA}
We state here some well-known  estimates on Riesz potential, see \cite[Theorem 4.3]{LiebLoss}, \cite[Theorem 14.37]{WheedenZygmund} and \cite[Theorem 2.8]{bernardini2023ergodic}.  First of all, we have

\begin{lemma}[Hardy Littlewood-Sobolev inequality] \label{H-L-S}
Assume that $0<\alpha<n$, $1<t<\frac{n}{\alpha}$ and $\frac{1}{s}=\frac{1}{t}-\frac{\alpha}{n}$. Then for any $f\in L^{t}(\mathbb R^n)$, 
\begin{align}\label{eqHLS_1}
	\Vert K_{\alpha}\ast f\Vert_{L^{s}(\mathbb R^n)}\leq C(n,\alpha,t) 	\Vert  f\Vert_{L^{t}(\mathbb R^n)},
\end{align}
where constant $C>0$ depending on $n$, $\alpha$ and $t$.  Moreover, suppose that $s,t>1$ with $\frac{1}{s}-\frac{\alpha}{n}+\frac{1}{t}=1$, $f\in L^{t}(\mathbb R^n)$ and $g\in L^{s}(\mathbb R^n)$. Then, we have there exists a sharp constant $C(n,\alpha,t)$ independent
of $f$ and $g$ such that
\begin{align}\label{eqHLS_2}
\bigg|\int_{\mathbb R^n}\int_{\mathbb R^n}\frac{ f(x) g(y)}{|x-y|^{n-\alpha}}\,dx\,dy\bigg|\leq C(n,\alpha,t)\Vert f\Vert_{L^{t}(\mathbb R^n)}\Vert g\Vert_{L^{s}(\mathbb R^n)}.
\end{align}
\end{lemma}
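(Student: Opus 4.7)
My approach is to establish the convolution estimate \eqref{eqHLS_1} first by Hedberg's splitting trick combined with the boundedness of the Hardy-Littlewood maximal operator, and then to deduce the bilinear estimate \eqref{eqHLS_2} by duality. Since this is a classical inequality, I would aim for the qualitative constant $C(n,\alpha,t)$ rather than the sharp Lieb constant.

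For \eqref{eqHLS_1}, fix $f\in L^{t}(\mathbb{R}^n)$ and $x\in\mathbb{R}^n$. The plan is to split the convolution as
\[
(K_\alpha\ast f)(x)=\int_{|y|\leq R}|y|^{\alpha-n}f(x-y)\,dy+\int_{|y|>R}|y|^{\alpha-n}f(x-y)\,dy,
\]
for a radius $R>0$ to be optimized pointwise. A dyadic decomposition of $\{|y|\leq R\}$ into annuli $\{2^{-k-1}R<|y|\leq 2^{-k}R\}$ bounds the near-field piece by $CR^{\alpha}Mf(x)$, where $M$ is the Hardy-Littlewood maximal operator. For the far-field piece, Hölder's inequality with exponents $t$ and $t'$ and the computation $\int_{|y|>R}|y|^{(\alpha-n)t'}\,dy = CR^{n-(n-\alpha)t'}$ (which converges precisely because $t<n/\alpha$, as forced by $1/s=1/t-\alpha/n$ with $s<\infty$) gives a bound of the form $CR^{\alpha-n/t}\|f\|_{t}$. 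Balancing the two terms by choosing $R=R(x)$ with $R^{n/t}\sim \|f\|_t/Mf(x)$ yields the pointwise inequality
\[
|(K_\alpha\ast f)(x)|\leq C\bigl(Mf(x)\bigr)^{t/s}\|f\|_{t}^{1-t/s}.
\]
Taking the $L^s$ norm of both sides and invoking the $L^t\to L^t$ boundedness of $M$ (valid because $t>1$) then gives \eqref{eqHLS_1}.

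For \eqref{eqHLS_2}, I would rewrite the double integral as a pairing and apply Hölder:
\[
\Big|\int_{\mathbb{R}^n}\int_{\mathbb{R}^n}\frac{f(x)g(y)}{|x-y|^{n-\alpha}}\,dx\,dy\Big|=\Big|\int_{\mathbb{R}^n}g(y)(K_\alpha\ast f)(y)\,dy\Big|\leq \|g\|_{L^{s}(\mathbb{R}^n)}\|K_\alpha\ast f\|_{L^{s'}(\mathbb{R}^n)}.
\]
A direct check shows that the condition $1/s+1/t-\alpha/n=1$ is exactly equivalent to $1/s'=1/t-\alpha/n$, so \eqref{eqHLS_1} at the exponent pair $(t,s')$ closes the estimate.

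The main technical point is the pointwise interpolation step: one must verify that the dyadic annular decomposition genuinely reduces the near-field to the maximal function (rather than only controlling it by $\|f\|_t$), since this is what upgrades the weak-type inequality coming from a direct application of Young's convolution inequality in weak $L^{n/(n-\alpha)}$ to the full strong-type statement. The scaling identities $R^\alpha\cdot Mf$ and $R^{\alpha-n/t}\cdot\|f\|_t$ must match the homogeneities $Mf\sim \|f\|_t/R^{n/t}$, and the algebra $1-t\alpha/n=t/s$ is what ultimately closes the exponents; this is where the relation $1/s=1/t-\alpha/n$ is essential. Everything else—measurability, convergence of the integrals, and passage from $\|f\|_t=1$ to the general case by homogeneity—is routine.
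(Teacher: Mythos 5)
Your proof is correct. The paper does not prove this lemma at all --- it simply quotes it from the classical references (Lieb--Loss, Theorem 4.3; Wheeden--Zygmund, Theorem 14.37) --- so any self-contained argument is necessarily a different route. Your Hedberg splitting is the standard "elementary" proof: the near-field/far-field decomposition with the pointwise bound $|(K_\alpha\ast f)(x)|\leq C(Mf(x))^{t/s}\|f\|_t^{1-t/s}$ and the $L^t$-boundedness of the maximal operator (using $t>1$) correctly yields \eqref{eqHLS_1}, and the exponent algebra you check ($1-t\alpha/n=t/s$, and $1/s'=1/t-\alpha/n$ under the hypothesis of \eqref{eqHLS_2}) is right, so the duality step closes \eqref{eqHLS_2}. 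The one discrepancy with the statement as written is the word "sharp": the lemma (following Lieb--Loss) asserts the existence of the sharp constant in \eqref{eqHLS_2}, which your argument does not produce --- obtaining the optimal constant requires Lieb's rearrangement/symmetrization argument, not the maximal-function proof. You flag this explicitly, and it is harmless for this paper: the constant $C(n,\alpha)$ enters only through the thresholds $C_{TV1}$, $C_{TV2}$, $C_{TV3}$, whose definitions are consistent with any admissible constant, so the qualitative version suffices for every application in the text.
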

We remark that in Lemma \ref{H-L-S}, if $s=t$ in \eqref{eqHLS_2} and $f\in L^{\frac{2n}{n+\alpha}}(\mathbb R^n)$, then there exists a sharp constant $C(n, \alpha)$ independent
of $f$ and $g$ such that
\begin{align}\label{eqHLS_3}
	\bigg|\int_{\mathbb R^n}\int_{\mathbb R^n}\frac{f(x) f(y)}{|x-y|^{n-\alpha}}\,dx\,dy\bigg|\leq C(n,\alpha)\Vert f\Vert^{2}_{L^{\frac{2n}{n+\alpha}}(\mathbb R^n)}.
\end{align}

\begin{lemma}[C.f. Theorem 2.8 in \cite{bernardini2023ergodic}] \label{HolderforRiesz}
Let $0<\alpha<n$ and $1<t\leq +\infty$ be positive constants such that $t>\frac{n}{\alpha}$ and $s\in\big[1,\frac{n}{\alpha}\big)$ . Then for any $f\in L^{s}(\mathbb R^n)\cap L^{t}(\mathbb R^n)$, we have
\begin{align}\label{InfityRiesz}
	\Vert K_{\alpha}* f\Vert_{L^{\infty}(\mathbb R^n)}\leq C_1 \Vert  f\Vert_{L^{s}(\mathbb R^n)}+C_2 \Vert  f\Vert_{L^{t}(\mathbb R^n)},
\end{align}\label{holderRiesz}
where $C_1=C(n,\alpha,s)$ and $C_2=C(n,\alpha,t)$. Moreover, if $0<\alpha-\frac{n}{t}<1$, then we have 
\begin{align}\label{holderRiesz_1}
K_{\alpha}* f\in C^{0,\alpha-\frac{n}{t}}(\mathbb R^n).	
\end{align}
In particular, there exists constant $C:=C(n,\alpha, t)>0$ such that
\begin{align*}
\frac{\big|K_{\alpha}* f(x)-K_{\alpha}\ast f(y)\big|}{|x-y|^{\alpha-\frac{n}{t}}}\leq C \Vert  f\Vert_{L^{t}(\mathbb R^n)}, \ \  \ \ \ \forall  x \neq y.
\end{align*} 
\end{lemma}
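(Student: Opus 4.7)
The plan is to prove both assertions by the standard dyadic decomposition of the convolution integral, with Hölder's inequality applied on each region. For the $L^\infty$ bound, fix $x \in \mathbb{R}^n$ and split $\mathbb{R}^n = B_1(x) \cup B_1(x)^c$. On the inner ball, apply Hölder with exponents $(t, t')$: the kernel contributes $\int_{B_1} |w|^{-(n-\alpha)t'} dw$, which is finite exactly because $t > n/\alpha$ forces $(n-\alpha)t' < n$. On the exterior, apply Hölder with $(s, s')$: now $\int_{B_1^c} |w|^{-(n-\alpha)s'} dw$ is finite because $s < n/\alpha$ gives $(n-\alpha)s' > n$. Taking the supremum in $x$ yields \eqref{InfityRiesz}, with the explicit constants $C_1 = C(n,\alpha,s)$ and $C_2 = C(n,\alpha,t)$ coming from these two truncated kernel integrals.

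For the Hölder regularity, set $h = x - y$ and write
$$K_\alpha * f(x) - K_\alpha * f(y) = \int_{\mathbb{R}^n} f(z)\Bigl(|x-z|^{-(n-\alpha)} - |y-z|^{-(n-\alpha)}\Bigr) dz,$$
then split at $|x-z| = 2|h|$. On the near region, I would bound the kernel difference crudely by the triangle inequality and apply Hölder with $(t, t')$; both of the resulting pieces reduce to $\|f\|_{L^t(\mathbb{R}^n)} \cdot |h|^{\alpha - n/t}$, where integrability of $|w|^{-(n-\alpha)t'}$ on $B_{C|h|}$ again uses $t > n/\alpha$. On the far region, every point of the segment joining $x$ and $y$ lies at distance at least $|x-z|/2$ from $z$, so the mean-value inequality gives
$$\Bigl||x-z|^{-(n-\alpha)} - |y-z|^{-(n-\alpha)}\Bigr| \leq C \frac{|h|}{|x-z|^{n-\alpha+1}},$$
and a second application of Hölder with $(t, t')$ produces $|h| \cdot \|f\|_{L^t(\mathbb{R}^n)} \cdot |h|^{(\alpha - n/t) - 1} = \|f\|_{L^t(\mathbb{R}^n)} \cdot |h|^{\alpha - n/t}$. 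Adding the two contributions delivers the uniform Hölder seminorm bound and, in particular, \eqref{holderRiesz_1}.

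The main delicate point is the scaling bookkeeping in the Hölder estimate: one must verify that the near-field and far-field integrals each contribute exactly the power $|h|^{\alpha - n/t}$, and that the hypotheses $t > n/\alpha$ and $\alpha - n/t < 1$ are precisely what render the relevant singular and tail integrals finite. In particular, the upper bound $\alpha - n/t < 1$ is invoked only in the far-field step, where an elementary computation shows it is equivalent to $(n-\alpha+1)t' > n$, which is exactly the decay at infinity required once the mean-value inequality has supplied the extra factor $|x-z|^{-1}$. Notably, the $L^s$ input used in the plain $L^\infty$ bound is not needed for the Hölder part, because the mean-value improvement upgrades the far-field decay enough to be controlled purely by $\|f\|_{L^t(\mathbb{R}^n)}$.
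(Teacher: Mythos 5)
Your argument is correct. The paper itself gives no proof of this lemma -- it is stated as a known estimate and deferred entirely to Theorem 2.8 of the cited reference (and to Lieb--Loss and Wheeden--Zygmund) -- so there is nothing in the paper to compare against except the citation; what you have written is the standard self-contained argument that underlies it. The exponent bookkeeping checks out: for the $L^\infty$ bound, $(n-\alpha)t'<n$ is indeed equivalent to $t>n/\alpha$ and $(n-\alpha)s'>n$ to $s<n/\alpha$, and both endpoints $t=+\infty$ (so $t'=1$, where $(n-\alpha)\cdot 1<n$ holds since $\alpha>0$) and $s=1$ (so $s'=\infty$, where the tail kernel is simply bounded by $1$) are covered, which matters because the hypotheses allow $t=+\infty$ and $s=1$. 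For the H\"older part, the near-field $y$-centered piece is controlled because $|x-z|\le 2|h|$ forces $|y-z|\le 3|h|$, both truncated integrals scale as $|h|^{\alpha-n/t}$ as you claim, and your observation that $(n-\alpha+1)t'>n$ is exactly $\alpha-n/t<1$ correctly identifies where the upper restriction on the H\"older exponent enters. Your closing remark that the $L^s$ hypothesis is not needed for the H\"older seminorm bound is also consistent with the statement, which bounds that seminorm by $\Vert f\Vert_{L^t}$ alone.
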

In Lemma \ref{HolderforRiesz}, we establish $L^{\infty}$ and H{\"o}lder estimates of $K_{\alpha}* f$ under certain conditions on $f$ and $\alpha$.
{
\begin{remark}
 In  the subsequent analysis of this paper, $K_{\alpha}\ast f$ in a bounded domain $\Omega\subset \mathbb R^{n}$ is defined as $K_{\alpha}\ast (\chi_{\Omega} f)$ in the whole space. Here $\chi_{\Omega}(x)=1$ if $x\in \Omega$ and $\chi_{\Omega}(x)=0$ if $x\notin \Omega$.
\end{remark}
}
\subsection{Hamilton-Jacobi Equations}\label{subsection1}
Consider
\begin{align}\label{Hamilton-Jacobi-eq}
\left\{\begin{array}{ll}
-\Delta u+C_H|\nabla u|^{\gamma}+\lambda=f(x),\  &\text{in} \ \Omega,\\
\vspace{0.5ex}
\frac{\partial u}{\partial\boldsymbol{n}}=0, \  &\text{on} \ \partial\Omega,\\
\vspace{0.5ex} 
\int_{\Omega}u\, dx=0,
\end{array}
\right.
\end{align}
where $\Omega$ is a bounded domain with the smooth boundary $\partial\Omega$, $C_H>0$, $\gamma>1$ and $\lambda\in\mathbb R$. Then we have the following existence results 
\begin{lemma}\label{HJBexists}
    If $f\in C^{0,\theta}(\overline{\Omega})$ with some $0<\theta<1$, then there exists a unique constant $\lambda\in \mathbb R$ such that problem \eqref{Hamilton-Jacobi-eq} admits a unique solution in $C^{2,\theta}(\overline{\Omega})$ and
    \begin{equation}\label{FPE2.1}
        \lambda=\sup \Big\{c\in \mathbb R\  s.t. \ \exists \ u \in C^{2}(\overline{\Omega}), \frac{\partial u}{\partial{\boldsymbol{n}}}=0 \ \ \text{on}\ \partial\Omega: -\Delta u+C_H|\nabla u|^{\gamma}+c\leq f \Big\}.
    \end{equation}
    In addition,
    $$\|\nabla u\|_{L^{\infty}(\Omega)}\leq C_1=C_1\left(n,\Omega,\|f\|_{L^{\infty}(\Omega)}\right)\ \text{and} \ \|u\|_{C^{2,\alpha}(\overline{\Omega})}\leq C_2=C_2\left(n,\Omega,\|f\|_{C^{0,\alpha}(\overline{\Omega})}\right)$$
\end{lemma}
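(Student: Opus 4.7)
The plan is to obtain the pair $(u,\lambda)$ by the classical ergodic approximation. For $\delta>0$, consider the discounted Neumann problem
\begin{equation*}
-\Delta u_\delta+C_H|\nabla u_\delta|^\gamma+\delta u_\delta=f\text{ in }\Omega,\qquad \frac{\partial u_\delta}{\partial\boldsymbol{n}}=0\text{ on }\partial\Omega.
\end{equation*}
Since $f\in C^{0,\theta}(\overline{\Omega})$ is bounded, the constants $\pm\|f\|_\infty/\delta$ are respectively sub- and super-solutions, so a Leray--Schauder / monotone-iteration argument combined with Lieberman's Schauder theory for quasilinear Neumann problems yields a unique $u_\delta\in C^{2,\theta}(\overline{\Omega})$; comparison also gives $\|\delta u_\delta\|_\infty\le\|f\|_\infty$.

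The next step is to obtain $\delta$-independent a priori estimates. A Bernstein-type calculation applied to $|\nabla u_\delta|^2$ (flattening $\partial\Omega$ near the boundary to absorb the Neumann boundary term into curvature contributions) gives a uniform Lipschitz bound $\|\nabla u_\delta\|_\infty\le C_1(n,\Omega,\|f\|_\infty)$. Setting $\bar u_\delta:=u_\delta-\fint_\Omega u_\delta$, the mean value theorem and the uniform Lipschitz bound yield a uniform $L^\infty$ bound on $\bar u_\delta$. Freezing $g_\delta:=f-C_H|\nabla u_\delta|^\gamma-\delta u_\delta\in C^{0,\theta}$ uniformly, standard Schauder estimates for the linear Neumann problem $-\Delta\bar u_\delta=g_\delta-\delta\fint u_\delta$ upgrade this to a uniform $C^{2,\theta}(\overline{\Omega})$ bound $\|\bar u_\delta\|_{C^{2,\theta}}\le C_2$. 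Passing along a subsequence $\delta_k\to 0^+$ gives $\bar u_{\delta_k}\to u$ in $C^2(\overline{\Omega})$ and $-\delta_k u_{\delta_k}(x_0)\to\lambda$ for any fixed $x_0$; the uniform bound on $\bar u_\delta$ shows the limit is independent of $x_0$, and $(u,\lambda)$ solves \eqref{Hamilton-Jacobi-eq} with $\int_\Omega u\,dx=0$ and $u\in C^{2,\theta}(\overline{\Omega})$.

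For uniqueness of $\lambda$, suppose $(u_1,\lambda_1)$ and $(u_2,\lambda_2)$ are two classical solutions with $\lambda_1\le\lambda_2$. Convexity of $p\mapsto|p|^\gamma$ gives
\begin{equation*}
-\Delta(u_1-u_2)+\boldsymbol{b}(x)\cdot\nabla(u_1-u_2)\le\lambda_2-\lambda_1
\end{equation*}
with $\boldsymbol{b}\in L^\infty$ and $\partial_{\boldsymbol{n}}(u_1-u_2)=0$. The strong maximum principle together with the Hopf lemma forces $\lambda_1=\lambda_2$; uniqueness of $u$ then follows from the same linear argument applied to $u_1-u_2$ and the normalization. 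The variational characterization \eqref{FPE2.1} is obtained by taking the existing solution as a competitor (so $\lambda\le\lambda^*$) and, conversely, comparing any subsolution $v$ with the true solution $u$ through the same Hopf/strong-maximum-principle argument to deduce $c\le\lambda$.

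The main obstacle is the uniform Lipschitz bound: one must control $|\nabla u_\delta|$ independently of $\delta$ up to the boundary despite the natural-growth nonlinearity $|\nabla u_\delta|^\gamma$. This forces a careful Bernstein-type computation with a barrier that accounts for the Neumann condition and the principal curvatures of $\partial\Omega$; once that estimate is in hand, the remainder of the proof is a standard Schauder bootstrap and maximum-principle uniqueness argument.
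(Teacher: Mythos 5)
The paper offers no proof of its own here---it defers entirely to \cite{Cirant2024CVPDE} (Theorem 2.7)---and your discounted/ergodic approximation (constant sub- and supersolutions for the $\delta$-problem, $\delta$-uniform Lipschitz bound, Schauder bootstrap on the normalized $\bar u_\delta$, passage to the limit $\delta u_\delta\to-\lambda$, and the convexity--plus--strong-maximum-principle/Hopf argument for uniqueness and for the sup-characterization \eqref{FPE2.1}) is precisely the standard route by which the cited result is proved. Your sketch is correct, and you rightly isolate the only step carrying real technical weight, namely the $\delta$-independent gradient bound up to the Neumann boundary for the coercive Hamiltonian $C_H|p|^\gamma$.
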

 \begin{proof}
 We refer the reader to the proof of Theorem 2.7 in \cite{Cirant2024CVPDE}.
 \end{proof}

For a-priori estimates of $u$, thanks to the Leray-Schauder fixed point theorem,  we have the following lemma
\begin{lemma}\label{nablabarrier_1}
Assume all conditions in Lemma \ref{HJBexists} hold, then for all $\sigma\in(0,\sigma_0]$ with $\sigma_0=\sigma_0(n,\Omega,\gamma,\alpha)>0$,  there exists $M=M(\sigma)\rightarrow 0$ as $\sigma\rightarrow 0$   such that, if  $\|f\|_{L^{\frac{n}{\gamma'}}(\Omega)}\leq \sigma\leq\sigma_0$,   then
    \begin{equation}\label{FPEregularity_1}
        \|\nabla u\|_{L^{n(\gamma-1)}(\Omega)}<M.
    \end{equation}
\end{lemma}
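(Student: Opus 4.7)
The plan is to close a self-improving estimate of the form $X \le C\sigma + CX^{\gamma}$ with $X := \|\nabla u\|_{L^{n(\gamma-1)}(\Omega)}$, and then invoke a continuity (Leray--Schauder) argument to select the small branch of this implicit inequality when $\sigma$ is small enough. The target exponent $n(\gamma-1)$ is dictated by the scaling of the equation: setting $p := n/\gamma'$, one has $\gamma p = n(\gamma-1)$ and the Sobolev exponent $p^{*} := np/(n-p) = n(\gamma-1)$, so $W^{2,p}$ elliptic regularity combined with Sobolev embedding naturally produces $\nabla u \in L^{n(\gamma-1)}(\Omega)$.

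First I would control the ergodic constant $\lambda$ by integrating \eqref{Hamilton-Jacobi-eq} over $\Omega$ and using the Neumann boundary condition together with $|\Omega|=1$, obtaining
\[
\lambda = \int_{\Omega} f\, dx - C_{H} \int_{\Omega} |\nabla u|^{\gamma}\, dx,
\]
so that $|\lambda| \le \|f\|_{L^{1}(\Omega)} + C_{H}\|\nabla u\|_{L^{\gamma}(\Omega)}^{\gamma} \le C\sigma + C_{H}\|\nabla u\|_{L^{n(\gamma-1)}(\Omega)}^{\gamma}$ by H\"older's inequality (valid since $n(\gamma-1) \ge \gamma$ in the regime $n\ge\gamma'$). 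Next I would rewrite the HJ equation as $-\Delta u = (f-\lambda) - C_{H}|\nabla u|^{\gamma}$ with Neumann BC and the zero-mean constraint $\int_{\Omega} u\,dx = 0$, and apply Calder\'on--Zygmund estimates for the Neumann Laplacian to get
\[
\|u\|_{W^{2,p}(\Omega)} \le C\bigl(\|f\|_{L^{p}(\Omega)} + |\lambda| + \||\nabla u|^{\gamma}\|_{L^{p}(\Omega)}\bigr).
\]
Since $\gamma p = n(\gamma-1)$, one has $\||\nabla u|^{\gamma}\|_{L^{p}} = \|\nabla u\|_{L^{n(\gamma-1)}}^{\gamma}$, and the Sobolev embedding $W^{2,p}(\Omega) \hookrightarrow W^{1,n(\gamma-1)}(\Omega)$ delivers the closed bound $X \le C\sigma + CX^{\gamma}$ for a constant $C = C(n,\Omega,\gamma,C_{H})$.

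The \emph{main obstacle} is that this implicit inequality alone does not force $X$ to be small: for $\gamma>1$ the set $\{X \ge 0 : X \le C\sigma + CX^{\gamma}\}$ decomposes into a small interval near the root $X \sim C\sigma$ plus an unbounded large-$X$ region, separated by the hump of the map $Y\mapsto Y - CY^{\gamma}$. To rule out the large branch I would embed the problem in the continuous family
\[
-\Delta u_{t} + C_{H}|\nabla u_{t}|^{\gamma} + \lambda_{t} = tf, \qquad t\in[0,1],
\]
with the same Neumann and zero-mean conditions. Lemma \ref{HJBexists} supplies a unique classical solution $u_{t}$ together with the $C^{2,\theta}$ regularity needed to make $t\mapsto u_{t}$ continuous in $C^{1}(\overline{\Omega})$, so $t\mapsto X_{t}$ is continuous with $X_{0}=0$. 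The derivation above yields $X_{t} \le Ct\sigma + CX_{t}^{\gamma}$ uniformly in $t$, and choosing $\sigma_{0}=\sigma_{0}(n,\Omega,\gamma,C_{H})$ so that the height of the hump of $Y - CY^{\gamma}$ strictly exceeds $C\sigma_{0}$ prevents $X_{t}$ from ever crossing to the large branch. Consequently $X_{t}$ remains on the small branch for every $t\in[0,1]$, and $X = X_{1}$ is bounded by the small root $M(\sigma)$ of $Y = C\sigma + CY^{\gamma}$, which clearly satisfies $M(\sigma)\to 0$ as $\sigma\to 0$. This homotopy step is where the Leray--Schauder framework mentioned in the text is genuinely required, since it is the only way to exclude the a priori admissible large-gradient branch.
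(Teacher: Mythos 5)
Your proposal is correct and follows essentially the same route as the paper, which simply defers to \cite[Theorem 2.10]{Cirant2024CVPDE}: that proof is precisely the combination of the integrated identity for $\lambda$, the $W^{2,n/\gamma'}$ Calder\'on--Zygmund estimate with the Sobolev embedding into $W^{1,n(\gamma-1)}$ giving the self-improving bound $X\le C\sigma+CX^{\gamma}$, and a continuation in $t$ (the Leray--Schauder step alluded to in the text) to stay on the small branch. The only point worth spelling out is the continuity of $t\mapsto u_{t}$, which follows from the uniform $C^{2,\theta}$ bounds of Lemma \ref{HJBexists} together with uniqueness via a standard compactness argument.
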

\begin{proof}
    The proof of this lemma is similar to Theorem 2.10 in \cite{Cirant2024CVPDE}. 
\end{proof}

\subsection{Fokker-Planck Equations}\label{subsection2}
In this subsection, we focus on the following Fokker-Planck equations
 \begin{align}\label{sect2-FP-eq}
\left\{\begin{array}{ll}
\Delta m+\nabla\cdot (m \boldsymbol{b})=0,\  &\text{in} \ \Omega,\\
\vspace{0.5ex}
\frac{\partial m}{\partial\boldsymbol{n}}+ m \boldsymbol{b}\cdot\boldsymbol{n} =0, \  &\text{on} \ \partial\Omega,\\
\vspace{0.5ex}
 \int_{\Omega}m\,=1,
\end{array}
\right.
\end{align}
where $\boldsymbol{b}: \mathbb R \rightarrow \mathbb R^n$ with $\boldsymbol{b}\in L^{s}(\Omega; \mathbb R^n)$ for some $s>n$, and $m\in W^{1,2}(\Omega)$ denotes the solution.  Concerning the regularity properties of solutions to \eqref{sect2-FP-eq}, we have
\begin{lemma}\label{regularity-mw1p}
Assume that $\boldsymbol{b} \in L^{\infty}(\Omega;\mathbb R^n)$, then problem \eqref{sect2-FP-eq} admits a unique weak solution $m\in W^{1,p}(\Omega)$ for all $p>1$ with 
$$
\|m\|_{W^{1,p}(\Omega)} \leq C :=C(\|\boldsymbol{b} \|_{\infty},p,n,\Omega).
$$
In addition, $m\in C^{0,\theta}(\overline{\Omega})$ for every $\theta\in (0,1)$ and there exists $C =C(\|\boldsymbol{b} \|_{\infty},p,n,\Omega)>0$ such that 
$$
0<C^{-1}\leq m(x)\leq C, \ \ \text{for any} \ \ x\in \Omega.
$$
\end{lemma}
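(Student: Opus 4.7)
The plan is to treat \eqref{sect2-FP-eq} as a linear elliptic equation in divergence form with bounded coefficients, and then run a standard regularity bootstrap. Writing the equation as $\partial_i(\partial_i m + b_i m)=0$ with the stated Neumann-type condition, the natural weak formulation is
\begin{equation*}
\int_{\Omega}\nabla m\cdot\nabla\varphi\, dx+\int_{\Omega}m\,\boldsymbol{b}\cdot\nabla\varphi\, dx=0,\qquad\forall\,\varphi\in W^{1,2}(\Omega),
\end{equation*}
since the boundary contributions cancel exactly because of the compatibility between $\partial_{\boldsymbol n}m$ and $m\boldsymbol{b}\cdot\boldsymbol{n}$. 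I would first establish existence and uniqueness in $W^{1,2}(\Omega)$ via a Fredholm / Krein-Rutman argument: the bilinear form is coercive modulo constants, and the adjoint equation $-\Delta\psi+\boldsymbol{b}\cdot\nabla\psi=0$ with $\partial_{\boldsymbol n}\psi=0$ has only constant solutions by the strong maximum principle, so the kernel of the primal operator is one-dimensional and spanned by a nonnegative function; the constraint $\int_{\Omega}m=1$ then selects a unique $m\geq 0$. Alternatively, one can add a regularization $\varepsilon(m-1/|\Omega|)$, solve by Lax-Milgram, and pass to the limit using the a priori $L^2$-bound coming from the mass constraint.

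Next I would upgrade the integrability. Since $\boldsymbol{b}\in L^\infty$ and $m\in L^2\subset L^{2}$, the source $-\nabla\cdot(m\boldsymbol{b})$ lies in $W^{-1,2}$, hence elliptic regularity (Calder\'on-Zygmund for $-\Delta$ on smooth bounded domains with Neumann data) gives $m\in W^{1,2}$. Bootstrapping: if $m\in L^{q}$ then $m\boldsymbol{b}\in L^{q}$, so $-\Delta m=\nabla\cdot(m\boldsymbol{b})\in W^{-1,q}$, whence $m\in W^{1,q}$ with quantitative control by $\|\boldsymbol{b}\|_\infty$, and Sobolev embedding yields $m\in L^{q^{\ast}}$ with $q^{\ast}>q$ as long as $q<n$. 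Finitely many iterations push $q$ past $n$, after which $m\in L^{\infty}$, and one more elliptic estimate gives $m\in W^{1,p}$ for every $p\in(1,\infty)$ with
\begin{equation*}
\|m\|_{W^{1,p}(\Omega)}\leq C(\|\boldsymbol{b}\|_\infty,p,n,\Omega).
\end{equation*}
H\"older continuity $m\in C^{0,\theta}(\overline{\Omega})$ for every $\theta\in(0,1)$ is then immediate from Morrey's embedding $W^{1,p}\hookrightarrow C^{0,1-n/p}$ by taking $p$ large, and the $L^\infty$ upper bound follows from continuity on the compact set $\overline{\Omega}$.

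Finally, for the positive lower bound I would invoke the De~Giorgi-Nash-Moser Harnack inequality for nonnegative weak solutions of uniformly elliptic divergence-form equations with bounded lower-order coefficients (e.g.\ Gilbarg-Trudinger Ch.~8 or Trudinger's paper on the Harnack inequality). On any ball $B_{2r}(x_{0})\subset\Omega$ one has $\sup_{B_{r}} m\leq C\inf_{B_{r}} m$ with $C=C(\|\boldsymbol{b}\|_\infty,n,r)$, and a chain-of-balls covering of the compact connected set $\overline{\Omega}$ (using the smoothness of $\partial\Omega$ to handle points near the boundary via a Neumann reflection, which preserves the divergence structure and the boundedness of $\boldsymbol{b}$) upgrades this to a global Harnack inequality $\sup_{\Omega}m\leq C\inf_{\Omega}m$. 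Combined with $\int_{\Omega}m\,dx=1$ and $|\Omega|=1$, this forces $\inf_{\Omega}m\geq C^{-1}>0$, and in particular $m>0$ throughout $\Omega$.

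The routine bootstrap and the H\"older step are textbook; the step that requires care is the boundary treatment in the Harnack argument, since the Neumann-type boundary condition couples $\partial_{\boldsymbol n}m$ with $\boldsymbol{b}\cdot\boldsymbol{n}$ rather than being a pure homogeneous Neumann condition. I would handle this either by a local flattening-and-reflection that absorbs the extra $m\,\boldsymbol{b}\cdot\boldsymbol{n}$ term into a modified $L^{\infty}$ drift on the reflected equation, or simply cite the boundary Harnack/Moser estimates available for divergence-form operators with conormal boundary data (as used in the literature on Fokker-Planck equations, e.g.\ Bogachev-Krylov-R\"ockner-Shaposhnikov).
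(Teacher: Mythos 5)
Your proposal is correct and follows the same route as the paper, which for this lemma simply cites \cite{CesaroniCirant2019} and \cite{Cirant2024CVPDE}: those references establish existence/uniqueness via the weak conormal formulation, obtain $W^{1,p}$ bounds for all $p$ by exactly the integrability bootstrap you describe, get H\"older continuity from Morrey embedding, and derive the two-sided bound from the Harnack inequality together with the normalization $\int_\Omega m\,dx=1$. Your attention to the boundary step (flattening/reflection to reduce the conormal condition to an interior-type Harnack estimate) is the right place to be careful and is consistent with how the cited works handle it.
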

\begin{proof}
See \cite[Proposition 2]{CesaroniCirant2019} or \cite[Themorem 2.3]{Cirant2024CVPDE}.
\end{proof}

\begin{lemma}\label{regularity-mw1p-1}
Let $m \in L^{p}(\Omega)$ for $p>1$ and assume that 
\begin{equation}\label{2.6-1}
\bigg|\int_{\Omega}m\Delta \phi \,dx\bigg|\leq K \|\nabla \phi\|_{L^{p'}(\Omega)}, 
\end{equation}
for all $\phi\in C^{\infty}(\overline{\Omega})$, $\frac{\partial\phi}{\partial\boldsymbol{n}}=0$ and some $K>0$. Then, $m\in W^{1,p}(\Omega)$ and it holds that
\begin{equation}\label{2.6-2}
\|m\|_{W^{1,p}(\Omega)}\leq C_{\Lambda}\left(K+\|m\|_{L^{p}(\Omega)}\right),
\end{equation}
for some $C_{\Lambda}=C_{\Lambda}(n,\Omega,p)$. In addition, we have the following local estimate
\begin{equation}\label{2.6-3}
\|m\|_{W^{1,p}(B_{R}(0))}\leq C_{\Lambda}(K+\|m\|_{L^{p}(B_{2R}(0))}),
\end{equation}
where $B_{2R}(0)\subset \Omega.$
\end{lemma}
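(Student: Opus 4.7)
The plan is to identify the distributional gradient of $m$ with an $L^{p}$ vector field by a duality argument: I will show that
\[
\bigg|\int_\Omega m\,\nabla\cdot\boldsymbol{\psi}\,dx\bigg|\leq C(K+\|m\|_{L^p(\Omega)})\,\|\boldsymbol{\psi}\|_{L^{p'}(\Omega)},\qquad \forall\,\boldsymbol{\psi}\in C_c^{\infty}(\Omega;\mathbb R^n),
\]
which by the Riesz representation theorem gives $\nabla m\in L^{p}(\Omega)$ together with \eqref{2.6-2}. The device that feeds this duality back into the hypothesis \eqref{2.6-1} is to replace $\boldsymbol{\psi}$ by the gradient of an auxiliary Neumann potential.

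More concretely, fix $\boldsymbol{\psi}\in C_c^{\infty}(\Omega;\mathbb R^n)$. Since $\boldsymbol{\psi}$ is compactly supported, the compatibility condition $\int_\Omega \nabla\cdot\boldsymbol{\psi}\,dx=0$ holds automatically, and one can solve
\[
\begin{cases}
\Delta\phi=\nabla\cdot\boldsymbol{\psi}, & \text{in }\Omega,\\[2pt]
\dfrac{\partial\phi}{\partial\boldsymbol{n}}=0, & \text{on }\partial\Omega,\\[4pt]
\int_\Omega\phi\,dx=0.
\end{cases}
\]
Because $\partial\Omega$ is smooth, the $L^{p'}$ theory for the Neumann Laplacian (equivalently, boundedness of the Helmholtz projection on $L^{p'}(\Omega;\mathbb R^n)$) yields a solution $\phi\in W^{2,p'}(\Omega)\cap C^{\infty}(\overline\Omega)$ with
\[
\|\nabla\phi\|_{L^{p'}(\Omega)}\leq C(n,\Omega,p)\,\|\boldsymbol{\psi}\|_{L^{p'}(\Omega)}.
\]
Applying the hypothesis \eqref{2.6-1} to this admissible $\phi$ and integrating by parts on the left-hand side yields
\[
\bigg|\int_\Omega m\,\nabla\cdot\boldsymbol{\psi}\,dx\bigg|=\bigg|\int_\Omega m\,\Delta\phi\,dx\bigg|\leq K\,\|\nabla\phi\|_{L^{p'}(\Omega)}\leq CK\,\|\boldsymbol{\psi}\|_{L^{p'}(\Omega)},
\]
so that $\nabla m\in L^{p}(\Omega)$ with $\|\nabla m\|_{L^{p}(\Omega)}\leq CK$; combining with $m\in L^{p}(\Omega)$ produces \eqref{2.6-2}.

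For the local estimate \eqref{2.6-3}, I would take $\boldsymbol{\psi}\in C_c^{\infty}(B_R(0);\mathbb R^n)$ with $B_{2R}(0)\subset\Omega$, solve the analogous Neumann problem on $B_{2R}(0)$ to obtain $\phi$ defined there, and then multiply by a cut-off $\zeta\in C_c^{\infty}(B_{2R}(0))$ with $\zeta\equiv 1$ on $B_R(0)$. Extending $\tilde\phi=\zeta\phi$ by zero produces an admissible test function on $\Omega$ (in particular $\partial\tilde\phi/\partial\boldsymbol{n}=0$ on $\partial\Omega$ holds trivially), and the Leibniz rule
\[
\Delta\tilde\phi=\zeta\,\Delta\phi+2\nabla\zeta\cdot\nabla\phi+\phi\,\Delta\zeta
\]
isolates the main piece $\int_{B_R} m\,\nabla\cdot\boldsymbol{\psi}\,dx$ from commutator terms supported in $B_{2R}\setminus B_R$. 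The main piece is controlled by \eqref{2.6-1} applied to $\tilde\phi$, while the commutators are bounded by $\|m\|_{L^{p}(B_{2R})}\|\boldsymbol{\psi}\|_{L^{p'}(B_R)}$ through Hölder's inequality and the interior $W^{1,p'}$-bound for $\phi$ on $B_{2R}$; rearranging and invoking the duality step as before yields \eqref{2.6-3}.

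The step I expect to be most delicate is the $L^{p'}$-estimate $\|\nabla\phi\|_{L^{p'}(\Omega)}\leq C\|\boldsymbol{\psi}\|_{L^{p'}(\Omega)}$ for the Neumann problem with divergence-form right-hand side: for $p'=2$ it is simply the energy identity, but for general $p>1$ one really needs the $L^{p'}$-theory for the Neumann Laplacian on a smooth domain. Once this is available, the remainder of the argument is a routine duality plus cut-off bookkeeping.
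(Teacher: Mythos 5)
Your argument is correct, and it is essentially the standard route: the paper itself gives no proof here, deferring entirely to \cite[Proposition 2.4]{Cirant2024CVPDE}, and that result is established by precisely the duality you describe (testing the hypothesis against gradients of solutions of auxiliary Neumann problems with divergence-form data, then invoking $(L^{p'})^{*}=L^{p}$). The one genuinely nontrivial ingredient is the one you flag yourself, namely the estimate $\|\nabla\phi\|_{L^{p'}(\Omega)}\leq C\|\boldsymbol{\psi}\|_{L^{p'}(\Omega)}$ for the weak Neumann problem $\Delta\phi=\nabla\cdot\boldsymbol{\psi}$ on a smooth bounded domain for all $1<p'<\infty$ (Simader--Sohr type theory); note that since $\boldsymbol{\psi}$ has compact support the natural boundary condition $(\nabla\phi-\boldsymbol{\psi})\cdot\boldsymbol{n}=0$ reduces to $\partial\phi/\partial\boldsymbol{n}=0$, so $\phi$ is an admissible test function in \eqref{2.6-1}, and elliptic regularity gives $\phi\in C^{\infty}(\overline{\Omega})$. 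For the local estimate, your cut-off bookkeeping is fine provided you also control $\|\phi\|_{L^{p'}(B_{2R})}$ (for the $\phi\,\Delta\zeta$ and $\phi\,\nabla\zeta$ terms) via Poincar\'e's inequality using the normalization $\int\phi=0$; with that noted, the proposal is complete.
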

\begin{proof}
    See \cite[Proposition 2.4]{Cirant2024CVPDE}.
\end{proof}

Lemma \ref{regularity-mw1p-1} indicates that for given $\boldsymbol{w}\in L^{p}(\Omega;\mathbb R^n)$, $p>1$, there exists $C>0$ such that $m$ satisfies
\begin{align}\label{FP-eq-main-w1p}
\|m\|_{W^{1,p}(\Omega)}\leq C\left(\|\boldsymbol{w}\|_{L^{p}(\Omega)}+\|m\|_{L^{p}(\Omega)}\right),
\end{align}
if $m\in W^{1,p}(\Omega)$ is defined as a weak solution of the following equation
\begin{align}\label{sect2-FP-eq-main}
\left\{\begin{array}{ll}
\Delta m+\nabla\cdot\boldsymbol{w}=0,\  &\text{in} \ \Omega,\\
\vspace{0.5ex}
\frac{\partial m}{\partial\boldsymbol{n}}+ \boldsymbol{w}\cdot\boldsymbol{n} =0,\  &\text{on} \ \partial\Omega,\\
\vspace{0.5ex}
 \int_{\Omega}m\,=1.
\end{array}
\right.
\end{align}
Inequality \eqref{FP-eq-main-w1p} exhibits the gradient estimates of solution $m$ to equation \eqref{sect2-FP-eq-main}. In order to tackle the existence of solution to (\ref{goalmodel}) with the Choquard coupling and Sobolev-critical exponent, we shall give a sharper version of \eqref{FP-eq-main-w1p}, which requires the $L^{n}$ norm of $\boldsymbol{b}$ to be controlled by some certain constants. More precisely, we have 
\begin{lemma}\label{b-infty}  
    Assume that $m\in W^{1,p}(\Omega)$ is a solution to (\ref{sect2-FP-eq}) and $\boldsymbol{b}\in L^{n} (\Omega;\mathbb R^n)$ with $\|\boldsymbol{b}\|_{L^{n}(\Omega)}\leq \zeta$.  
    Then for any $p<n$, the solution $m$ of \eqref{sect2-FP-eq} satisfies
    \begin{equation}\label{2.6.7}
        \|m\|_{W^{1,p}(\Omega)}\leq C=C(p,n,\Omega,\zeta).
    \end{equation}
\end{lemma}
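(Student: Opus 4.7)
The plan is to reduce the sought $W^{1,p}$ estimate to a uniform $L^{\infty}$ bound on $m$ in terms of $n,\Omega,\zeta$, and then to obtain that $L^{\infty}$ bound by a Moser--type iteration that handles the critical $L^{n}$ scaling of the drift via a small-plus-bounded decomposition of $\boldsymbol{b}$.

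For the reduction, suppose one has already shown $\|m\|_{L^{\infty}(\Omega)}\leq C_{0}=C_{0}(n,\Omega,\zeta)$. Integrating by parts in the weak form of \eqref{sect2-FP-eq}, for any $\phi\in C^{\infty}(\overline{\Omega})$ with $\frac{\partial\phi}{\partial\boldsymbol{n}}=0$ one has $\int_{\Omega}m\,\Delta\phi\,dx=\int_{\Omega}m\boldsymbol{b}\cdot\nabla\phi\,dx$, and H\"older together with $\|\boldsymbol{b}\|_{L^{p}(\Omega)}\leq|\Omega|^{\frac{1}{p}-\frac{1}{n}}\zeta$ yields
\[
\Big|\int_{\Omega}m\,\Delta\phi\,dx\Big|\leq \|m\|_{L^{\infty}}\|\boldsymbol{b}\|_{L^{p}(\Omega)}\|\nabla\phi\|_{L^{p'}(\Omega)}\leq C_{0}|\Omega|^{\frac{1}{p}-\frac{1}{n}}\zeta\,\|\nabla\phi\|_{L^{p'}(\Omega)}.
\]
Applying Lemma \ref{regularity-mw1p-1} with $K=C_{0}|\Omega|^{1/p-1/n}\zeta$, together with the trivial $L^{p}$ bound $\|m\|_{L^{p}(\Omega)}\leq C_{0}|\Omega|^{1/p}$, immediately delivers \eqref{2.6.7}.

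The core is the $L^{\infty}$ bound. I would first approximate $\boldsymbol{b}$ by a sequence of bounded vector fields $\boldsymbol{b}_{\ell}\to\boldsymbol{b}$ in $L^{n}$ with $\|\boldsymbol{b}_{\ell}\|_{L^{n}}\leq\zeta$, invoke Lemma \ref{regularity-mw1p} to obtain strictly positive approximating solutions $m_{\ell}$, and test the approximating equation against $m_{\ell}^{q-1}$ for $q>1$. Setting $u_{\ell}:=m_{\ell}^{q/2}$, this gives
\[
\tfrac{4(q-1)}{q^{2}}\|\nabla u_{\ell}\|_{L^{2}(\Omega)}^{2}=-\tfrac{2(q-1)}{q}\int_{\Omega}u_{\ell}\,\boldsymbol{b}_{\ell}\cdot\nabla u_{\ell}\,dx.
\]
To absorb the right-hand side without assuming $\zeta$ small, I would split $\boldsymbol{b}_{\ell}=\boldsymbol{b}_{\ell,1}+\boldsymbol{b}_{\ell,2}$, with $\boldsymbol{b}_{\ell,1}:=\boldsymbol{b}_{\ell}\chi_{\{|\boldsymbol{b}_{\ell}|>K\}}$, and choose $K=K(q)$ (via absolute continuity of the $L^{n}$ integral, uniformly in $\ell$) so that $\|\boldsymbol{b}_{\ell,1}\|_{L^{n}(\Omega)}$ is small enough for the corresponding H\"older--Sobolev contribution to be absorbed on the left, while $\|\boldsymbol{b}_{\ell,2}\|_{L^{\infty}(\Omega)}\leq K$ is handled by Young's inequality. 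This produces a Moser step $\|m_{\ell}\|_{L^{qn/(n-2)}(\Omega)}\leq C_{q}^{2/q}\|m_{\ell}\|_{L^{q}(\Omega)}$ with $C_{q}$ growing only polynomially in $q$; iterating $q_{k+1}=\frac{n}{n-2}q_{k}$ from $q_{0}\geq 2$ (whose $L^{q_{0}}$-norm is controlled via the same scheme and $\|m_{\ell}\|_{L^{1}}=1$) and summing the convergent series $\sum(2/q_{k})\log C_{q_{k}}$ delivers $\|m\|_{L^{\infty}(\Omega)}\leq C(n,\Omega,\zeta)$ after passing $\ell\to\infty$.

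The main obstacle is precisely this critical-scale absorption. The $L^{n}$ norm of $\boldsymbol{b}$ sits exactly at the Sobolev-critical scaling for \eqref{sect2-FP-eq}, so the naive H\"older--Sobolev estimate $\|m\boldsymbol{b}\|_{L^{p}}\leq\|m\|_{L^{p^{*}}}\|\boldsymbol{b}\|_{L^{n}}$ combined with $W^{1,p}\hookrightarrow L^{p^{*}}$ produces a prefactor in front of $\|m\|_{W^{1,p}}$ that cannot be absorbed unless $\zeta$ is small. The small-plus-bounded decomposition of $\boldsymbol{b}$ circumvents this, but the technical care lies in tracking the $q$-dependence of the truncation threshold $K(q)$ and verifying that $C_{q}$ remains polynomial in $q$, so that the Moser iteration converges to a finite $L^{\infty}$ bound. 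In the low-dimensional cases $n\leq 2$ the argument simplifies substantially, since the Sobolev embedding already reaches every $L^{q}$.
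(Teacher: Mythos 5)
The paper does not prove this lemma at all; it cites \cite[Theorems 7.1 and 8.1]{AgmonShmue} and \cite[Proposition 2.6]{Cirant2024CVPDE}, so your proposal is being measured against the literature rather than against an in-paper argument. Unfortunately your strategy has a fatal flaw at its very first step: the uniform bound $\|m\|_{L^{\infty}(\Omega)}\leq C_{0}(n,\Omega,\zeta)$ to which you reduce everything is \emph{false}. Take $\Omega=B_{1}\subset\mathbb R^{n}$, $n\geq 3$, and the gradient drifts $\boldsymbol{b}_{\delta}=\nabla V_{\delta}$ with $V_{\delta}(x)=\kappa_{\delta}\log\big(\max(|x|,\delta)\big)$; the exact solution of \eqref{sect2-FP-eq} is $m_{\delta}=C_{\delta}e^{-V_{\delta}}$, i.e.\ $m_{\delta}=C_{\delta}|x|^{-\kappa_{\delta}}$ for $|x|>\delta$, with $C_{\delta}$ bounded above and below by the normalization. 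Since $\|\boldsymbol{b}_{\delta}\|_{L^{n}}^{n}=\sigma_{n-1}\kappa_{\delta}^{n}\log(1/\delta)$, choosing $\kappa_{\delta}\sim \zeta(\log(1/\delta))^{-1/n}$ keeps $\|\boldsymbol{b}_{\delta}\|_{L^{n}}=\zeta$ fixed while
\begin{equation*}
\|m_{\delta}\|_{L^{\infty}}\;\sim\;\exp\!\Big(c\,\zeta\,(\log(1/\delta))^{\frac{n-1}{n}}\Big)\longrightarrow\infty \quad\text{as }\delta\to 0,
\end{equation*}
even though $\|m_{\delta}\|_{W^{1,p}}$ stays bounded for every $p<n$ (because $\kappa_{\delta}\to 0$). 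So the lemma is consistent, but only at the $W^{1,p}$, $p<n$, level; $L^{\infty}$ control in terms of $\zeta$ alone does not exist, and no correct argument can pass through it.

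The same example shows exactly where your Moser iteration breaks: the truncation level $K(q)$ needed so that $\|\boldsymbol{b}\chi_{\{|\boldsymbol{b}|>K\}}\|_{L^{n}}\lesssim 1/q$ is governed by the modulus of uniform integrability of $|\boldsymbol{b}|^{n}$, which is \emph{not} a function of $\zeta=\|\boldsymbol{b}\|_{L^{n}}$; and even for a fixed singular $\boldsymbol{b}\in L^{n}$ (e.g.\ $\boldsymbol{b}=c\,x|x|^{-2}(\log(1/|x|))^{-1/n-\epsilon}$, whose stationary density is unbounded) the growth of $K(q)$ is too fast for $\sum_k q_k^{-1}\log C_{q_k}$ to converge — necessarily so, since the conclusion $m\in L^{\infty}$ fails. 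Your preliminary reduction (H\"older plus Lemma \ref{regularity-mw1p-1}) is fine as a conditional statement, but the condition is unobtainable. A correct proof has to exploit that $p$ is \emph{strictly} below $n$, working directly with the duality estimate $|\int_{\Omega}m\Delta\phi|\leq\|m\|_{L^{p^{*}}}\|\boldsymbol{b}\|_{L^{n}}\|\nabla\phi\|_{L^{p'}}$ and resolving the critical fixed point $W^{1,p}\hookrightarrow L^{p^{*}}$ there (this is what the cited results of Agmon and of Cirant--Cosenza--Verzini accomplish; note also that in the paper's only application of this lemma, in Lemma \ref{mepsinfty}, $\zeta$ is in fact taken small, in which case the duality estimate closes by direct absorption).
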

\begin{proof}
    See \cite[Theorem 7.1 and 8.1] {AgmonShmue} or \cite[Proposition 2.6]{Cirant2024CVPDE} for the detailed proof.
\end{proof}

With the help of the conclusions shown in \eqref{FP-eq-main-w1p} and Lemma \ref{b-infty}, we can obtain the refined estimates of $m$ and the results are summarized as
\begin{lemma}\label{lemma21-crucial}
Assume that $(m,\boldsymbol{w})\in \left(L^1(\Omega)\cap W^{1,\beta}(\Omega)\right)\times L^1(\Omega)$ is a solution to \eqref{sect2-FP-eq-main} and 
\begin{equation}\label{lemmacrucialnewest}
\Lambda:=\int_{\Omega}|m|\Big|\frac{\boldsymbol{w}}{m}\Big|^{\gamma'}\,dx<\infty,
\end{equation}
where $\beta$ is given by
\eqref{constraint-set-K}.  Then, there exists $C_{\alpha}>0$ such that
\begin{equation}\label{lemmacrucialnewest-1}
\|m\|_{W^{1,\beta}(\Omega)},\ \|m\|_{L^{q_{\alpha}}(\Omega)} \leq C_{\alpha}\left(\Lambda+1\right).
\end{equation}
In addition, if $\alpha>\alpha_{mc}$, there exists $\delta>0$ such that 
\begin{equation}\label{lemmacrucialnewest-2}
\|m\|_{L^{q_{\alpha}}(\Omega)}^{q_{\alpha}(1+\delta)}\leq C_{\alpha}\left(\Lambda+1\right).
\end{equation}
 and if $\alpha=\alpha_{mc}$, then \eqref{lemmacrucialnewest-2} holds with $\delta=\frac{n-\gamma'}{n}$ and $q_{\alpha}(1+\delta)=2$.
\end{lemma}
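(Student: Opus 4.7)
The strategy is to close a self-improving inequality for the quantity $X := \|m\|_{L^{q_\alpha}(\Omega)}$ by chaining three ingredients: a H\"older-type bound on $\boldsymbol{w}$ in $L^\beta$ in terms of $\Lambda$ and $X$; the linear $W^{1,\beta}$ estimate \eqref{FP-eq-main-w1p} coming from Lemma \ref{regularity-mw1p-1}; and a Sobolev-type embedding returning from $W^{1,\beta}(\Omega)$ to $L^{q_\alpha}(\Omega)$.

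For \eqref{lemmacrucialnewest-1} I would first decompose
\[
|\boldsymbol{w}|^\beta \;=\; \bigl(m\,|\boldsymbol{w}/m|^{\gamma'}\bigr)^{\beta/\gamma'}\cdot m^{\beta/\gamma}
\]
and apply H\"older's inequality with conjugate exponents $\gamma'/\beta$ and $\gamma'/(\gamma'-\beta)$. The algebraic key is that the defining relation $1/\beta=1/\gamma'+1/(\gamma q_\alpha)$ forces the exponent on $m$ after normalization to be exactly $q_\alpha$, giving $\|\boldsymbol{w}\|_{L^\beta(\Omega)}\leq \Lambda^{1/\gamma'}X^{1/\gamma}$. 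Inserting this into \eqref{FP-eq-main-w1p} and controlling $\|m\|_{L^\beta(\Omega)}$ by Lyapunov interpolation between $L^1$ and $L^{q_\alpha}$ (note $\|m\|_{L^1}=1$ and $\beta<q_\alpha$, since $1/\beta-1/q_\alpha=(1-1/q_\alpha)/\gamma'>0$) gives
\[
\|m\|_{W^{1,\beta}(\Omega)}\;\leq\;C\bigl(\Lambda^{1/\gamma'}X^{1/\gamma}+X^\theta+1\bigr),\qquad \theta\in[0,1).
\]
I would then invoke the Sobolev embedding $W^{1,\beta}(\Omega)\hookrightarrow L^{q_\alpha}(\Omega)$ to promote the left-hand side to $X$, and a small-parameter Young's inequality $\Lambda^{1/\gamma'}X^{1/\gamma}\leq\varepsilon X+C_\varepsilon\Lambda$ together with $X^\theta\leq\varepsilon X+C_\varepsilon$ absorbs the $X$-dependence back into the left side, yielding $X\leq C(\Lambda+1)$. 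Substituting this back into the $W^{1,\beta}$ estimate finishes \eqref{lemmacrucialnewest-1}.

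For the refined bound \eqref{lemmacrucialnewest-2} I would replace the Sobolev embedding above by the sharper Gagliardo--Nirenberg interpolation
\[
\|m\|_{L^{q_\alpha}(\Omega)}\;\leq\;C\,\|m\|_{W^{1,\beta}(\Omega)}^{a}\,\|m\|_{L^1(\Omega)}^{1-a}\;=\;C\,\|m\|_{W^{1,\beta}(\Omega)}^{a},
\]
whose scaling exponent $a=\gamma n(q_\alpha-1)/\bigl(n(q_\alpha-1)+\gamma q_\alpha\bigr)$ is strictly less than one precisely when $\alpha>\alpha_{sc}$. Combining with the same H\"older bound on $\boldsymbol{w}$ gives $X^{1/a}\leq C(\Lambda^{1/\gamma'}X^{1/\gamma}+1)$, and the analogous Young-type absorption produces $X^{\gamma'(\gamma-a)/(a\gamma)}\leq C(\Lambda+1)$. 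Setting $q_\alpha(1+\delta):=\gamma'(\gamma-a)/(a\gamma)$ identifies $\delta$; an elementary computation shows $\delta>0$ as soon as $q_\alpha<(n+\gamma')/n$, which covers every $\alpha\geq\alpha_{mc}$, while plugging $\alpha=\alpha_{mc}$ (so $q_\alpha=2n/(2n-\gamma')$) gives the explicit value $q_\alpha(1+\delta)=2$, i.e., $\delta=(n-\gamma')/n$, exactly as claimed.

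The delicate point I expect to be the exponent bookkeeping that couples the critical thresholds $\alpha_{sc}$ and $\alpha_{mc}$ to the scaling of the two key inequalities: the compatibility of the H\"older splitting with the Sobolev embedding is what makes the loop close only when $\alpha\geq\alpha_{sc}$, and the strict improvement $\delta>0$ obtained by swapping Sobolev for Gagliardo--Nirenberg is exactly what $\alpha\geq\alpha_{mc}$ encodes. The remaining steps, namely interpolating $\|m\|_{L^\beta(\Omega)}$, performing the Young-type absorption, and back-substituting to obtain the $W^{1,\beta}$ bound, are routine.
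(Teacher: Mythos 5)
Your proposal is correct and follows essentially the same route as the paper: the Hölder splitting $|\boldsymbol{w}|=(m|\boldsymbol{w}/m|^{\gamma'})^{1/\gamma'}m^{1/\gamma}$ combined with the $W^{1,\beta}$ estimate of Lemma \ref{regularity-mw1p-1}, the $L^1$--$L^{q_\alpha}$ interpolation of $\|m\|_{L^\beta}$, the Sobolev embedding $W^{1,\beta}\hookrightarrow L^{q_\alpha}$ to close the loop, and the sharper interpolation through $L^{\beta^*}$ (your Gagliardo--Nirenberg step) for \eqref{lemmacrucialnewest-2}. Your exponent bookkeeping matches the paper's, including $q_\alpha(1+\delta)=\gamma' q_\alpha/(n(q_\alpha-1))$ and the value $2$ at $\alpha=\alpha_{mc}$; the only differences ($\varepsilon$-Young absorption instead of dividing exponents, bounding $\|\boldsymbol{w}\|_{L^\beta}$ directly rather than via the dual formulation) are cosmetic.
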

\begin{proof}
Proceeding the similar argument shown in \cite[Proposition 5]{CesaroniCirant2019}, we can prove this lemma. For the 
sake of completeness, we exhibit the proof briefly.  First of all, we have the $m$-equation in \eqref{sect2-FP-eq-main} implies
\begin{align}\label{2.7-1}
   \Big{|}\int_{\Omega}m(-\Delta\phi)\,dx\Big{|} &=\Big{|}\int_{\Omega}\boldsymbol{w}\cdot\nabla\phi\,dx\Big{|}\leq \int_{\Omega}\left(|m|\Big|\frac{\boldsymbol{w}}{m}\Big|^{\gamma'}\right)^{\frac{1}{\gamma'}}|m|^{\frac{1}{\gamma}}|\nabla\phi|\,dx\nonumber\\
&\leq \left(\int_{\Omega}|m|\Big|\frac{\boldsymbol{w}}{m}\Big|^{\gamma'}\,dx \right)^{\frac{1}{\gamma'}}\|m\|_{L^{q_{\alpha}}(\Omega)}^{\frac{1}{\gamma}}\|\nabla\phi\|_{L^{\beta'}(\Omega)}\nonumber\\
&\leq \Lambda^{\frac{1}{\gamma'}}\|m\|_{L^{q_{\alpha}}(\Omega)}^{\frac{1}{\gamma}}\|\nabla\phi\|_{L^{\beta'}(\Omega)},
\end{align}
for any $\phi\in C^{\infty}(\overline{\Omega})$, where $\beta'=\frac{\beta}{\beta-1}$ and $\beta$ is given by
\eqref{constraint-set-K}. Then, we deduce from \eqref{2.7-1} and Lemma \ref{regularity-mw1p-1} that
\begin{equation}\label{2.7-2}
\|\nabla m\|_{L^{\beta}(\Omega)}\leq C_{\alpha}\left( 
 \Lambda^{\frac{1}{\gamma'}}\|m\|_{L^{q_{\alpha}}(\Omega)}^{\frac{1}{\gamma}}+\|m\|_{L^{\beta}(\Omega)}\right),
\end{equation}
where and in the discussion below, $C_\alpha>0$ denotes a constant dependent of $\alpha$, which may change line to line.  In addition, it follows from the Sobolev interpolation inequality that 
\begin{equation}\label{2.7-3}
 \|m\|_{L^{\beta}(\Omega)}\leq \|m\|_{L^{q_{\alpha}}(\Omega)}^{\frac{1}{\gamma}}\|m\|_{L^{1}(\Omega)}^{\frac{1}{\gamma'}}=\|m\|_{L^{q_{\alpha}}(\Omega)}^{\frac{1}{\gamma}}.
\end{equation}
 Therefore, by \eqref{2.7-2} and \eqref{2.7-3}, one has 
\begin{equation}\label{2.7-4}
    \|m\|_{W^{1,\beta}(\Omega)}\leq C_{\alpha}\left(\Lambda^{\frac{1}{\gamma'}}+1\right)\|m\|_{L^{q_{\alpha}}(\Omega)}^{\frac{1}{\gamma}}.
\end{equation}
On the other hand, it follows from the Sobolev embedding theorem that there exists a constant $C>0$ depends on $n$ and $\alpha$ such that $\|m\|_{L^{q}(\Omega)}\leq C\|m\|_{W^{1,\beta}(\Omega)} $, where we have used the fact $q_\alpha< \beta^*$ for any $\alpha>\alpha_{sc}$,  with $\beta^*=\frac{n\beta}{n-\beta}$ if $n>\beta$ and $\beta^*=\infty$ if $n\leq\beta$. Thus, one finds 
$$
\|m\|_{L^{q_{\alpha}}(\Omega)}\leq C_{\alpha}\left(\Lambda+1\right),
$$
which, together with \eqref{2.7-4}, yields $$\|m\|_{W^{1,\beta}(\Omega)}\leq C_{\alpha}\left(\Lambda+1\right).$$

It remains to prove \eqref{lemmacrucialnewest-2}. Since $1<q_{\alpha}<\beta^*$ for all $\alpha>\alpha_{mc}>\alpha_{sc}$, we invoke Sobolev interpolation to get
\begin{equation}\label{2.7-5}
    \|m\|_{L^{q_{\alpha}}(\Omega)}\leq\|m\|_{L^{1}(\Omega)}^{1-\theta}\|m\|_{L^{\beta^*}(\Omega)}^{\theta}\leq C_{\alpha}\left(\Lambda^{\frac{\theta}{\gamma'}}+1\right)\|m\|_{L^{q_{\alpha}}(\Omega)}^{\frac{\theta}{\gamma}}, 
\end{equation}
where $\theta$ satisfies $\frac{1}{q_{\alpha}}=\frac{\theta}{\beta^*}+1-\theta$. Then, we obtain that
\begin{equation}\label{2.7-6}
    \|m\|^{q_{\alpha}(1+\delta)}_{L^{q_{\alpha}}(\Omega)}\leq C_{\alpha}\left(\Lambda+1\right)\|m\|_{L^{q_{\alpha}}(\Omega)}, 
\end{equation}
where $\delta=\frac{1}{q_{\alpha}-1}\left(\frac{\gamma'+n}{n}-q_{\alpha}\right)$. In particular, if $\alpha =\alpha_{mc}$, one has $\delta=\frac{n-\gamma'}{n}$ and $q_{\alpha}(1+\delta)=2$.  Now, we finish the proof of this lemma.
\end{proof} 

\section{Existence of Global and Local Minimizers}\label{sect3-optimal}
In this section, we shall discuss the existence of minimizers of the energy functional associated with problem \eqref{goalmodel} and prove Theorem \ref{thm11-optimal}.  To this end, we first consider the regularized problem with smoothing couplings and show the existence of global or local minimizers of the regularized energy functional. Then, we will prove the existence of a solution to the regularized system by applying the convex duality 
argument.  Finally, by showing the uniform boundedness of the solution sequences, we can obtain the existence of the solution to the original problem after taking the limit.  To begin with, we focus on the following regularized system 
 \begin{align}\label{regularizedgoalmodel}
\left\{\begin{array}{ll}
-\Delta u+C_H|\nabla u|^{\gamma}+\lambda= -C_{f}\left( K_{\alpha}\ast m \ast \eta_{\varepsilon}\right),\  &\text{in} \ \Omega,\\
\vspace{0.5ex}
\Delta m+C_{H}\gamma\nabla\cdot (m|\nabla u|^{\gamma-2}\nabla u)=0,\  &\text{in} \ \Omega,\\
\vspace{0.5ex}
\frac{\partial u}{\partial\boldsymbol{n}}=0,\ \frac{\partial m}{\partial\boldsymbol{n}}+ C_{H}\gamma m|\nabla u|^{\gamma-2}\nabla u \cdot\boldsymbol{n} =0, \  &\text{on} \ \partial\Omega,\\
\vspace{0.5ex}
 \int_{\Omega}m\,=1,\ \ \int_{\Omega}u\,=0,
\end{array}
\right.
\end{align}
where $\eta_{\varepsilon}>0$ is the standard mollifier satisfying
$$
\int_{\Omega}\eta_{\varepsilon} \,dx=1,\ \ supp(\eta_{\varepsilon})\subset B_{\varepsilon}(0)\ \text{for} \ \varepsilon>0 \ \text{sufficiently small}.
$$

It is well known that solutions to auxiliary problem \eqref{regularizedgoalmodel} correspond to the critical points of the following energy functional 
\begin{align}\label{NengliangofApprop}
\mathcal E_{\varepsilon}(m,\boldsymbol{w})=C_L\int_{\Omega}\Big|\frac{\boldsymbol{w}}{m}\Big|^{\gamma'}m\,dx-\frac{1}{2}\int_{\Omega}C_{f}\bigg\{m(K_{\alpha}*m)\bigg\}\ast \eta_{\varepsilon}\,dx,\ \text{for} \ (m,\boldsymbol{w})\in \mathcal{A},
\end{align}
where $K_{\alpha}$ is defined in \eqref{MFG-K}. 
 Invoking the Young's inequality for convolution, the properties of mollifiers and Hardy-Littlewood-Sobolev inequality \eqref{eqHLS_3}, one finds that
\begin{equation}\label{Control_of_F}
    \left|\int_{\Omega}C_{f}\Big\{m(K_{\alpha}*m)\Big\}\ast \eta_{\varepsilon}\,dx\right|\leq C_{f} C(n,\alpha)\Vert m\Vert^{2}_{L^{\frac{2n}{n+\alpha}}(\Omega)}=C_{f} C(n,\alpha)\Vert m\Vert^{2}_{L^{q_{\alpha}}(\Omega)},
\end{equation}
where $C(n,\alpha)$ is the best constant of the inequality.

\subsection{Existence of Minimizers to Regularized Problem}
This subsection is devoted to the analysis of the regularized system \eqref{regularizedgoalmodel}.  In detail, we shall first prove the existence of global minimizers to (\ref{NengliangofApprop}) when $\alpha>\alpha_{mc}$ and $\alpha=\alpha_{mc}$.  Then with the additional procedures, we will study the existence of local minmizers when $\alpha<\alpha_{mc}$.   
There exist some results  analyzing the local minima in distinct systems under the mass-supercritical regime , see e.g., \cite{Jeanjean2016siam, Bellazzini2018mathann, Verzini2017cvpde}.  Motivated by their work, we study the existence of local minimizers in (\ref{NengliangofApprop}) and define
\begin{equation}\label{RegularProblem}
e_{r}:=\inf_{(m,\boldsymbol{w})\in \mathcal{B}_r}\mathcal E_{\varepsilon}(m,\boldsymbol{w})
\end{equation}
and 
\begin{equation}\label{RegularProblem_0}
\hat{e}_{r}:=\inf_{(m,\boldsymbol{w})\in \mathcal{U}_r}\mathcal E_{\varepsilon}(m,\boldsymbol{w}), 
\end{equation}
where
\begin{equation}\label{RegularProblem_1}
\mathcal{B}_r:=\bigg\{(m,\boldsymbol{w})\in \mathcal{A}:\|m\|_{L^{q_{\alpha}}(\Omega)}\leq r\bigg\}
\ \ \text{and }\ \
\mathcal{U}_r:=\bigg\{(m,\boldsymbol{w})\in \mathcal{A}:\|m\|_{L^{q_{\alpha}}(\Omega)}= r\bigg\},
\end{equation}
in which $q_{\alpha}$ is given by \eqref{constraint-set-K}.

Next, we show the existence of a minimizer to $\mathcal{E}_{\varepsilon}$ given by (\ref{NengliangofApprop}) in the set $\mathcal{B}_{r}$.
With some assumptions imposed on $C_f$ defined in \eqref{formfmtaken}, we also analyze the existence of a global minimum under the mass-subcritical and mass-critical cases. 
\begin{lemma}\label{globalminregularized}
   For any $r\geq1$, then $ e_{r}$ given by (\ref{RegularProblem}) has at least one minimizer.  Moreover, if $\alpha_{mc}<\alpha<n$ or $\alpha=\alpha_{mc}$ with 
 \begin{equation}\label{globalminregularized_1}
     C_{f}<C_{TV1}:=\frac{2C_{L}}{C_{\alpha}C(n,\alpha)},
 \end{equation}  
where $C_\alpha$ and $C(n,\alpha)$ are given by \eqref{lemmacrucialnewest-1}  and  \eqref{Control_of_F}, respectively.  Then  $\mathcal E_{\varepsilon}$ has a global minimum in $\mathcal{A}$. 
\end{lemma}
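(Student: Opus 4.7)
The plan is to apply the direct method of the calculus of variations to both halves of the statement, relying on Lemma \ref{lemma21-crucial} to convert control of the kinetic term $\Lambda$ into $W^{1,\beta}$-compactness of $m$, and on the HLS bound \eqref{Control_of_F} to handle the interaction term. A crucial observation is that the kinetic functional $(m,\boldsymbol{w})\mapsto\int_\Omega m\,|\boldsymbol{w}/m|^{\gamma'}\,dx$ is convex and positively one-homogeneous in $(m,\boldsymbol{w})$, hence weakly lower semicontinuous on $L^1\times L^1$; and the mollification $\eta_\varepsilon$ is a fixed smoothing which will make the interaction term continuous under weak $L^{q_\alpha}$-convergence.

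For the existence of a minimizer of $e_r$, take a minimizing sequence $(m_n,\boldsymbol{w}_n)\subset\mathcal{B}_r$. By definition $\|m_n\|_{L^{q_\alpha}}\le r$, so by \eqref{Control_of_F} the interaction term is bounded by $\tfrac12 C_f C(n,\alpha)r^2$. This forces $\Lambda_n:=\int_\Omega m_n|\boldsymbol{w}_n/m_n|^{\gamma'}\,dx$ to be uniformly bounded, and then Lemma \ref{lemma21-crucial} gives a uniform $W^{1,\beta}(\Omega)$ bound on $m_n$; compactness of the Sobolev embedding $W^{1,\beta}\hookrightarrow L^{q_\alpha}$ (valid since $q_\alpha<\beta^\ast$ in the regimes of interest) yields $m_n\to m$ strongly in $L^{q_\alpha}$ up to a subsequence, while Hölder's inequality applied to $|\boldsymbol{w}_n|=m_n^{1/\gamma'}\cdot|\boldsymbol{w}_n/m_n|\,m_n^{1/\gamma'}$ bounds $\|\boldsymbol{w}_n\|_{L^{\gamma'q_\alpha/(\gamma'+q_\alpha-1)}}$, giving weak convergence $\boldsymbol{w}_n\rightharpoonup\boldsymbol{w}$. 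The divergence constraint and the mass/sign constraints pass to the weak limit, so $(m,\boldsymbol{w})\in\mathcal{A}$; weak lower semicontinuity of the $L^{q_\alpha}$-norm keeps the limit in $\mathcal{B}_r$; and strong $L^{q_\alpha}$-convergence combined with the HLS inequality (or dominated convergence after unfolding the convolution with $\eta_\varepsilon$) ensures the interaction term converges. The weak lower semicontinuity of the convex kinetic functional then yields $\mathcal{E}_\varepsilon(m,\boldsymbol{w})\le\liminf\mathcal{E}_\varepsilon(m_n,\boldsymbol{w}_n)=e_r$, so $(m,\boldsymbol{w})$ is a minimizer.

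For the global minimum on $\mathcal{A}$, the strategy is to show $\mathcal{E}_\varepsilon$ is bounded below and coercive in $\|m\|_{L^{q_\alpha}}$, and then reduce to the previous case by picking $r\ge 1$ large enough. Combining \eqref{Control_of_F} with Lemma \ref{lemma21-crucial} gives
\begin{equation*}
\mathcal{E}_\varepsilon(m,\boldsymbol{w})\;\ge\;C_L\Lambda-\tfrac12 C_f C(n,\alpha)\|m\|_{L^{q_\alpha}}^2,
\end{equation*}
and the estimate \eqref{lemmacrucialnewest-2} reads $\|m\|_{L^{q_\alpha}}^{q_\alpha(1+\delta)}\le C_\alpha(\Lambda+1)$. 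When $\alpha>\alpha_{mc}$ we have $q_\alpha(1+\delta)>2$, so the negative term grows like $\Lambda^{\theta}$ with $\theta=2/(q_\alpha(1+\delta))<1$ and is absorbed by $C_L\Lambda$ up to an additive constant, yielding coercivity for free. When $\alpha=\alpha_{mc}$ the exponent degenerates to $q_\alpha(1+\delta)=2$, so
\begin{equation*}
\mathcal{E}_\varepsilon(m,\boldsymbol{w})\;\ge\;\bigl(C_L-\tfrac12 C_f C(n,\alpha)C_\alpha\bigr)\Lambda-C,
\end{equation*}
which is bounded below precisely under the threshold $C_f<C_{TV1}=2C_L/(C_\alpha C(n,\alpha))$ defined in \eqref{globalminregularized_1}. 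In either case any minimizing sequence satisfies $\|m_n\|_{L^{q_\alpha}}\le r$ for some fixed $r\ge 1$, so it lies in $\mathcal{B}_r$ and the first part of the lemma produces a global minimizer.

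The main obstacle I expect is not the bare direct-method machinery but rather the sharp bookkeeping in the mass-critical case: one must verify that the exponent $q_\alpha(1+\delta)$ really collapses to $2$ at $\alpha=\alpha_{mc}$ and that the constant $C_\alpha$ from Lemma \ref{lemma21-crucial} is the same constant which enters the threshold $C_{TV1}$, so that the strict inequality $C_f<C_{TV1}$ truly gives a positive coefficient in front of $\Lambda$. A secondary technical point is the passage to the limit in the interaction term: because the convolution with $\eta_\varepsilon$ is a compact operator in $L^{q_\alpha}$ (for each fixed $\varepsilon>0$), strong $L^{q_\alpha}$-convergence of $m_n$ is sufficient, which is why we route the argument through Lemma \ref{lemma21-crucial} to obtain $W^{1,\beta}$-compactness rather than attempting to control the mollified Choquard term by weak convergence alone.
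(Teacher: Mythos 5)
Your proposal is correct and follows essentially the same route as the paper: direct method on $\mathcal{B}_r$ using \eqref{Control_of_F} to bound the interaction term, Lemma \ref{lemma21-crucial} to get $W^{1,\beta}$-compactness and the H\"older bound on $\boldsymbol{w}_n$, weak lower semicontinuity of the convex kinetic functional, and then coercivity on all of $\mathcal{A}$ via \eqref{lemmacrucialnewest-2} with $q_\alpha(1+\delta)>2$ for $\alpha>\alpha_{mc}$ and the threshold $C_f<C_{TV1}$ when $q_\alpha(1+\delta)=2$ at $\alpha=\alpha_{mc}$. The only caveat is your reliance on compactness of $W^{1,\beta}\hookrightarrow L^{q_\alpha}$, which degenerates exactly at $\alpha=\alpha_{sc}$ (where $q_\alpha=\beta^\ast$) even though the first claim of the lemma is also used there; the paper instead passes to the limit using only weak $W^{1,\beta}$-convergence, strong $L^1$-convergence and the fixed mollifier $\eta_\varepsilon$.
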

\begin{proof}
To begin with, we establish the lower bound of the energy $\mathcal E_{\varepsilon}$ in $\mathcal{B}_r$. 
 For any $(m,\boldsymbol{w})\in \mathcal{B}_r$, by using \eqref{lemmacrucialnewest-1} and \eqref{Control_of_F}, we obtain from the fact $\|m\|_{L^{q_{\alpha}}(\Omega)}\leq r$ in $\mathcal{B}_r$ that
\begin{equation}\label{global-1}
  \mathcal E_{\varepsilon}(m,\boldsymbol{w}) \geq \frac{C_{L}}{C_{\alpha}}\|m\|_{L^{q_{\alpha}}(\Omega)}-\frac{1}{2}C_{f}C(n,\alpha) \|m\|^{2}_{L^{q_{\alpha}}(\Omega)}-C_{L}\geq K_{0},
\end{equation} 
 for some constant $K_{0}$. Now, let $(m_k,\boldsymbol{w}_k)\in \mathcal{B}_r$ be a minimizing sequence of $e_{r}$, that is, $\lim\limits_{k\to \infty }\mathcal{E}_{\varepsilon}(m_k,\boldsymbol{w}_k)=e_{r}$. Then, one gets $\mathcal{E}_{\varepsilon}(m_k,\boldsymbol{w}_k)\leq e_{r}+1$. Moreover, thanks to \eqref{NengliangofApprop} and \eqref{Control_of_F}, we find
 \begin{align}\label{global-2}
\int_{\Omega}\Big|\frac{\boldsymbol{w_{k}}}{m_{k}}\Big|^{\gamma'}m_{k}\,dx&=C^{-1}_{L}\left(\mathcal{E}_{\varepsilon}(m_{k},\boldsymbol{w_{k}})+\frac{1}{2}\int_{\Omega}C_{f}\bigg\{m_{k}(K_{\alpha}*m_{k})\bigg\}\ast \eta_{\varepsilon}\,dx\right)\nonumber\\ 
&\leq C^{-1}_{L}\left(e_{r}+1+\frac{1}{2}C_{f}C(n,\alpha)r^2\right), 
\end{align}
 which shows that $\mathcal \int_{\Omega}\Big|\frac{\boldsymbol{w}_{k}}{m_{k}}\Big|^{\gamma'}m_{k}\,dx$ is bounded. Then, in light of \eqref{lemmacrucialnewest-1}, we have $\|m_k\|_{W^{1,\beta}(\Omega)}\leq C$ for some constant $C>0.$  Thus, we have up to a subsequence,
 \begin{equation}\label{global-3}
 m_k\rightharpoonup m\ \ \text{in}\ \ W^{1,\beta}(\Omega),\ \  m_k\to m\ \ \text{in}\ \ L^{1}(\Omega),\ \  
   m_k\to m\ \ \text{a.e.\ on}\ \ \Omega,
 \end{equation}
where $\beta$ is given by \eqref{constraint-set-K}. Invoking the H{\"o}lder inequality, one has
$$\int_{\Omega}|\boldsymbol{w}_{k}|^{\frac{\gamma'q_{\alpha}}{\gamma'+q_{\alpha}-1}}\leq \left(\int_{\Omega}\Big|\frac{\boldsymbol{w}_k}{m_{k}}\Big|^{\gamma'}m_{k}\,dx\right)^{\frac{q_{\alpha}}{\gamma'+q_{\alpha}-1}}\Vert m_{k}\Vert_{L^{q_{\alpha}}(\Omega)}^{\frac{\gamma'-1}{q_{\alpha}(\gamma'+q_{\alpha}-1)}},$$
which implies that $\boldsymbol{w}_k$ is equibounded in $L^{\frac{\gamma'q_{\alpha}}{\gamma'+q_{\alpha}-1}}(\Omega)$ and hence $\boldsymbol{w}_k\rightharpoonup \boldsymbol{w}\ \text{in}\ L^{\frac{\gamma'q_{\alpha}}{\gamma'+q_{\alpha}-1}}(\Omega)$. Thanks to \eqref{global-3} and Fatou's Lemma, we deduce that $\int_{\Omega}m=1$, $m\geq 0$ and $m\in \mathcal{B}_r$. In addition, one can infer that $(m,\boldsymbol{w})\in \mathcal{A}$. As a consquence, we have
$$
\mathcal{E}_{\varepsilon}(m,\boldsymbol{w})\leq \liminf_{k\to \infty} \int_{\Omega}\Big|\frac{\boldsymbol{w_{k}}}{m_{k}}\Big|^{\gamma'}m_{k}\,dx-\lim_{k\to \infty}\frac{1}{2}\int_{\Omega}C_{f}\bigg\{m_{k}(K_{\alpha}*m_{k})\bigg\}\ast \eta_{\varepsilon}\,dx\leq \liminf_{k\to\infty} \mathcal{E}_{\varepsilon}(m_{k},\boldsymbol{w}_{k}) = e_{r},
$$
 where the convexity  of $\int_{\Omega}\Big|\frac{\boldsymbol{w}}{m}\Big|^{\gamma'}m\,dx$ and Young's inequality for convolution  are used. 
 This indicates  that $e_{r}$ is attained by $(m,\boldsymbol{w})$.  
If $\alpha_{mc}<\alpha<n$, we utilize Lemma \ref{lemma21-crucial} to get
$$\|m\|_{L^{q_{\alpha}}(\Omega)}^{q_{\alpha}(1+\delta)}\leq C_{\alpha}\left(\Lambda+1\right),$$
where $\delta=\frac{1}{q_{\alpha}-1}\left(\frac{\gamma'+n}{n}-q_{\alpha}\right)$ and $\Lambda$ is defined by \eqref{lemmacrucialnewest}. Then, there holds for  some constant $K_{1}$ that 
\begin{equation}\label{global-4}
 \mathcal{E}_{\varepsilon}(m,\boldsymbol{w}) \geq \frac{C_{L}}{C_{\alpha}}\|m\|^{q_{\alpha}(1+\delta)}_{L^{q_{\alpha}}(\Omega)}-\frac{1}{2}C_{f}C(n,\alpha) \|m\|^{2}_{L^{q_{\alpha}}(\Omega)}-C_{L}\geq K_{1}, 
\end{equation} 
where we have used that $q_{\alpha}(1+\delta)>2$ for any $\alpha>\alpha_{mc}$. In addition, we define $$e:=\inf\limits_{(m,\boldsymbol{w})\in \mathcal{A}}\mathcal E_{\varepsilon}(m,\boldsymbol{w})$$
and choose a minimizing sequence $(m_k,\boldsymbol{w}_k)\in \mathcal{A}$ of $e$. Then, by a similar argument as used for the derivation of \eqref{global-2}, we get
\begin{equation}\label{global-5}
\int_{\Omega}\Big|\frac{\boldsymbol{w_{k}}}{m_{k}}\Big|^{\gamma'}m_{k}\, dx
    \leq C^{-1}_{L}\left(e+1+\frac{1}{2}C_{f}C(n,\alpha)\left(C_{\alpha}\left(1+\int_{\Omega}\Big|\frac{\boldsymbol{w_{k}}}{m_{k}}\Big|^{\gamma'}m_{k}\,dx\right)\right)^{\frac{2}{q_{\alpha}(1+\delta)}}\right),
\end{equation}
which, together with the fact $q_{\alpha}(1+\delta)>2$, shows that $\int_{\Omega}\Big|\frac{\boldsymbol{w_{k}}}{m_{k}}\Big|^{\gamma'}m_{k}\,dx$ is bounded.  Then by proceeding with the same argument shown above, we obtain the existence of a minimizer. 
 Finally, if $\alpha=\alpha_{mc}$, we find $q_{\alpha}(1+\delta)=2$ in (\ref{global-5}).  Then by choosing $\frac{2C_{L}}{C_{\alpha}C(n,\alpha)}>C_{f}$, one repeats the procedure shown above to obtain that $\mathcal{E}_{\varepsilon}$ has a global minimum. 
\end{proof}

We next turn our attention to the mass-supercritical and Sobolev-critical cases, that are, $\alpha_{sc}<\alpha<\alpha_{mc}$ and $\alpha=\alpha_{sc}$. We aim to show the existence of a local minimum, in which the crucial step is to show that $\hat{e}_{r}$ defined by (\ref{RegularProblem_0}) is achieved for some $r$, possibly on $\mathcal{B}_{r}\subset\mathcal{U}_{r}$. To this end, we shall find $\bar{r}$ such that $ e_{\bar{r}}<\hat{e}_{\bar{r}}$, which is shown as follows.
\begin{lemma}\label{lobalcriticlemma}      
If $\alpha_{sc}<\alpha <\alpha_{mc}$, assume that $\bar{r}=\frac{C_{L}}{C_{\alpha}C_{f}C(n,\alpha)}$ with $C_{\alpha}$ and $C(n,\alpha)$ defined in (\ref{lemmacrucialnewest-1}) and (\ref{Control_of_F}), also
\begin{equation}\label{lobalcriticlemma_1}
    C_{f}<C_{TV2}:=\min\left\{\frac{C_{L}}{C_{\alpha}C(n,\alpha)},\  \frac{C_{L}}{2C^{2}_{\alpha}C(n,\alpha)}\right\},\ 
\end{equation}
 or if $\alpha=\alpha_{sc}$, assume that $\bar{r}={C_{\alpha}}+1$ with
\begin{equation}\label{lobalcriticlemma_2}
    C_{f}<C_{TV3}:=\frac{2C_{L}}{C_{\alpha}C(n,\alpha)(1+C_{\alpha})^{2}}.
\end{equation}   
Then, $\mathcal{E}_{\varepsilon}$
admits a local minimum $(m_{\varepsilon},\boldsymbol{w}_{\varepsilon})\in \mathcal{A}\cap \mathcal{B}_{\bar{r}}$. In addition,
    $$\mathcal{E}_{\varepsilon}(m_{\varepsilon},\boldsymbol{w}_{\varepsilon})=e_{\bar{r}-\sigma}=e_{\bar{r}}$$
    for any $0<\sigma\leq \sigma_{0}$ with some $\sigma_{0}>0$ sufficiently small.
\end{lemma}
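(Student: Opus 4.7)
The plan is to combine the existence of a constrained minimizer from Lemma \ref{globalminregularized} with a strict energy gap between the interior and the boundary of $\mathcal{B}_{\bar r}$. In both cases one has $\bar r>1$ (under \eqref{lobalcriticlemma_1} the first term of the minimum guarantees this, while \eqref{lobalcriticlemma_2} yields $\bar r=C_\alpha+1>1$), so Lemma \ref{globalminregularized} applied with $r=\bar r$ furnishes a minimizer $(m_\varepsilon,\boldsymbol{w}_\varepsilon)\in\mathcal{B}_{\bar r}$ realizing $e_{\bar r}$. To upgrade it to a local minimizer with $e_{\bar r-\sigma}=e_{\bar r}$, it suffices to rule out $\|m_\varepsilon\|_{L^{q_\alpha}(\Omega)}=\bar r$, which I would do by establishing the strict gap $e_{\bar r}<\hat e_{\bar r}$.

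For the lower bound on $\hat e_{\bar r}$, I would take any $(m,\boldsymbol{w})\in\mathcal{U}_{\bar r}$, set $\Lambda:=\int_\Omega m|\boldsymbol{w}/m|^{\gamma'}\,dx$, and combine the basic estimate $\|m\|_{L^{q_\alpha}(\Omega)}\leq C_\alpha(\Lambda+1)$ from Lemma \ref{lemma21-crucial} (the only one available when $\alpha\leq\alpha_{mc}$) with the HLS bound \eqref{Control_of_F}. Using $\|m\|_{L^{q_\alpha}(\Omega)}=\bar r$ this yields
\begin{equation*}
\mathcal{E}_\varepsilon(m,\boldsymbol{w})\;\geq\; g(\bar r)\;:=\;\frac{C_L\bar r}{C_\alpha}-\frac12 C_f C(n,\alpha)\bar r^{\,2}-C_L.
\end{equation*}
In the case $\alpha_{sc}<\alpha<\alpha_{mc}$, the value $\bar r=C_L/(C_\alpha C_f C(n,\alpha))$ is precisely the maximizer of the concave parabola on the right, and a short computation gives $g(\bar r)=C_L^{\,2}/(2C_\alpha^{\,2}C_f C(n,\alpha))-C_L$, strictly positive by the second inequality in \eqref{lobalcriticlemma_1}. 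In the case $\alpha=\alpha_{sc}$, plugging $\bar r=C_\alpha+1$ yields $g(\bar r)=C_L/C_\alpha-\tfrac12 C_f C(n,\alpha)(C_\alpha+1)^{2}$, again strictly positive by \eqref{lobalcriticlemma_2}.

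For the matching upper bound on $e_{\bar r}$, I would test with the constant pair $(m,\boldsymbol{w})=(1,\boldsymbol{0})$. It belongs to $\mathcal{A}$ since $|\Omega|=1$ and lies in $\mathcal{B}_{\bar r}$ because $\|1\|_{L^{q_\alpha}(\Omega)}=1<\bar r$. Its energy
\begin{equation*}
\mathcal{E}_\varepsilon(1,\boldsymbol{0})\;=\;-\frac12 C_f\int_\Omega\{K_\alpha\ast 1\}\ast\eta_\varepsilon\,dx\;<\;0
\end{equation*}
is strictly negative, hence $e_{\bar r}\leq\mathcal{E}_\varepsilon(1,\boldsymbol{0})<0<g(\bar r)\leq\hat e_{\bar r}$. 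Consequently any minimizer $(m_\varepsilon,\boldsymbol{w}_\varepsilon)$ of $e_{\bar r}$ must satisfy $\|m_\varepsilon\|_{L^{q_\alpha}(\Omega)}<\bar r$; taking $\sigma_0:=\bar r-\|m_\varepsilon\|_{L^{q_\alpha}(\Omega)}>0$ gives $\mathcal{E}_\varepsilon(m_\varepsilon,\boldsymbol{w}_\varepsilon)=e_{\bar r-\sigma}=e_{\bar r}$ for every $0<\sigma\leq\sigma_0$, which identifies $(m_\varepsilon,\boldsymbol{w}_\varepsilon)$ as the desired local minimizer in $\mathcal{A}$.

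The main obstacle, and what actually forces the quantitative smallness of $C_f$, is the tuning of $\bar r$: it has to be large enough that the barrier height $g(\bar r)$ on $\mathcal{U}_{\bar r}$ strictly exceeds the negative plateau reached by $(1,\boldsymbol{0})$, yet small enough that $(1,\boldsymbol{0})$ still lies in $\mathcal{B}_{\bar r}$. In the mass-supercritical Sobolev-subcritical regime one can afford the optimal choice $\bar r=C_L/(C_\alpha C_f C(n,\alpha))$ that sits exactly at the vertex of the parabola, while in the genuinely Sobolev-critical regime the margin is the tightest since Lemma \ref{lemma21-crucial} only provides the linear-in-$\|m\|_{L^{q_\alpha}(\Omega)}$ estimate (no superlinear gain is available), which is precisely the reason \eqref{lobalcriticlemma_2} is quantitatively more restrictive than \eqref{lobalcriticlemma_1}.
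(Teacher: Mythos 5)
Your proposal is correct and follows essentially the same route as the paper: the lower bound on the boundary level $\hat e_{\bar r}$ via the estimate $\|m\|_{L^{q_\alpha}(\Omega)}\leq C_\alpha(\Lambda+1)$ from Lemma \ref{lemma21-crucial} together with the Hardy--Littlewood--Sobolev bound \eqref{Control_of_F}, the upper bound via the constant test pair $(1,\boldsymbol{0})$, the same choices of $\bar r$ in the two regimes (vertex of the parabola for $\alpha_{sc}<\alpha<\alpha_{mc}$, and $\bar r=C_\alpha+1$ for $\alpha=\alpha_{sc}$), and the same role for the smallness conditions on $C_f$. The only cosmetic difference is that the paper phrases the upper bound as $\hat e_{r_1}\leq\mathcal{E}_\varepsilon(1,\boldsymbol{0})\leq 0$ with $r_1=1$ and obtains $\sigma_0$ from continuity of $\varphi(r)$, whereas you compare $e_{\bar r}$ directly with $\mathcal{E}_\varepsilon(1,\boldsymbol{0})$ and take $\sigma_0=\bar r-\|m_\varepsilon\|_{L^{q_\alpha}(\Omega)}$; both are equivalent.
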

\begin{proof}
Firstly, we consider the case of $\alpha_{sc}<\alpha <\alpha_{mc}$.  Following the argument shown in \cite{Cirant2024CVPDE}, we shall find $r_{1}<r_{2}$ such that $\hat{e}_{r_{1}}<\hat{e}_{r_{2}}$.  With this, one has
  \begin{equation}\label{local-1}
  e_{r_2}=\min_{r\in[0,r_{2}]}\left\{\hat{e}_{r}\right\}\leq  \hat{e}_{r_{1}}<\hat{e}_{r_{2}},
   \end{equation}
  which implies the existence of an interior minimum of $\mathcal{E}_{\varepsilon}$ in $\mathcal{B}_{r_{2}}$. To achieve this, we choose $r_{1}=1$ and $r_{2}=\bar{r}=\frac{C_{L}}{C_{\alpha}C_{f}C(n,\alpha)}$, respectively. Noting that $ C_{f}<\frac{C_{L}}{C_{\alpha}C(n,\alpha)}$, we have $\bar{r}>1$. For any $(m,\boldsymbol{w})\in\mathcal{U}_{r_{1}}$, we apply H{\"o}lder's inequality then obtain from the assumption $\vert \Omega\vert=1$ that
  $$1=\Vert m\Vert_{L^{1}(\Omega)}\leq \left(\int_{\Omega}m^{q_{\alpha}}\, dx \right)^{\frac{1}{q_{\alpha}}} |\Omega|^{\frac{1}{q'_{\alpha}}}=1,$$
 which implies that $m(x)=1$ a.e. in $\Omega$ since the equality here holds. Moreover, it is easy to check that $(m,\boldsymbol{w})\equiv (1,\boldsymbol{0})\in \mathcal{U}_{r_{1}}$. Hence, in light of \eqref{NengliangofApprop}, one gets
  \begin{equation}\label{local-2}
      \hat{e}_{r_{1}}\leq \mathcal{E}_{\varepsilon}(1,\boldsymbol{0}) \leq 0.
  \end{equation}
  On the other hand, we derive from \eqref{global-1} with $r=\bar{r}$ that
 \begin{equation}\label{local-3}
       \hat{e}_{\bar{r}}\geq \frac{C_{L}}{C_{\alpha}}\bar{r}-\frac{1}{2}C_{f}C(n,\alpha)\bar{r}^{2}-C_{L}.
 \end{equation}
 To guarantee $\hat{e}_{r_{1}}<\hat{e}_{r_{2}}$, we are going to prove that 
 \begin{equation}\label{local-4}
      C_{L}<\frac{C_{L}}{C_{\alpha}}\bar{r}-\frac{1}{2}C_{f}C(n,\alpha)\bar{r}^{2}.
 \end{equation}
Indeed, it is straightforward to verify that $\bar{r}=\frac{C_{L}}{C_{\alpha}C_{f}C(n,\alpha)}$ is the maximum point of function $\varphi(r):=\frac{C_{L}}{C_{\alpha}}r-\frac{1}{2}C_{f}C(n,\alpha)r^{2}$. Hence, we conclude that \eqref{local-4} holds under the 
condition
 $$
    C_{f}<\min\left\{\frac{C_{L}}{C_{\alpha}C(n,\alpha)},\  \frac{C_{L}}{2C^{2}_{\alpha}C(n,\alpha)}\right\}.
$$
As a result, we obtain the desired conclusion.

Now, we consider the case of $\alpha=\alpha_{sc}$.  Similarly as the argument shown above, we apply again \eqref{local-3} and \eqref{local-2} to get  
\begin{equation}\label{local-5}
     C_{L}<\frac{C_{L}}{C_{\alpha}}\bar{r}-\frac{1}{2}C_{f}C(n,\alpha)\bar{r}^{2},
 \end{equation}
where we take $\bar{r}={C_{\alpha}}+1$ and have used the condition $C_{f}<\frac{2C_{L}}{C_{\alpha}C(n,\alpha)(1+C_{\alpha})^{2}}$.  Then it follows that  $\hat{e}_{r_{1}}<\hat{e}_{r_{2}}$.

Finally, we deduce from the continuity of $\varphi(r)$ that there exists $\sigma_{0}>0$ small enough such that $\hat{e}_{r_{1}}<\hat{e}_{\bar{r}-\sigma}$ for any $0<\sigma\leq \sigma_{0}$.  This completes the proof of our lemma.
\end{proof}
\begin{remark}
   As shown in the proof of Lemma \ref{lobalcriticlemma}, one can see that the choice of $\bar{r}$ depends on $C_{f}$ in the mass-supercritical case and the Sobolev-subcritical case.  As a consequence, $\bar r$ can be chosen large enough such that $\mathcal{E}_{\varepsilon}$ has an interior minimizer in $\mathcal{B}_{\bar{r}}$. Whereas, in the Sobolev-critical case, the construction of $\bar{r}$ is independent of $C_{f}$. 
\end{remark}
\subsection{Existence of Minimizers to Original Problem}
In this section, we shall finish the proof of Theorem \ref{thm11-optimal}.  First of all, we aim to obtain a solution to the regularized system \eqref{regularizedgoalmodel} by invoking the convex duality argument.  Noting that $\mathcal{E}_{\varepsilon}$ is not convex, we introduce the following convex functional 
\begin{equation}\label{linearizedfun}
\mathcal{J}_{\varepsilon}(m,\boldsymbol{w})=C_L\int_{\Omega}\Big|\frac{\boldsymbol{w}}{m}\Big|^{\gamma'}m\,dx-C_{f}\int_{\Omega}m(K_{\alpha}\ast m_{\varepsilon}\ast \eta_{\varepsilon})\,dx,
\end{equation}
where the second term is linear and $m_{\varepsilon}$ is given.  Next, we will show that $(m_{\varepsilon},\boldsymbol{w}_{\varepsilon})$, the minimizer of $\mathcal{E}_{\varepsilon}$, is also a minimizer of $\mathcal{J}_{\varepsilon}$. Moreover, we obtain the existence of $u_{\varepsilon}$ and $\lambda_{\varepsilon}$ by using Lemma \ref{HJBexists}, then prove $(m_{\varepsilon},u_{\varepsilon},\lambda_{\varepsilon})$ is a solution to  \eqref{regularizedgoalmodel}.
\begin{lemma}\label{regularizationsameminimizerl}  
   Let $(m_{\varepsilon},\boldsymbol{w}_{\varepsilon})$ be a global minimizer of $\mathcal{E}_{\varepsilon}$ in $\mathcal{A}$ or a local minimizer in $\mathcal{B}_{\bar{r}}$ obtained in Lemma \ref{globalminregularized} or Lemma \ref{lobalcriticlemma}. Then
\begin{equation}\label{sameminimizer}
\min_{(m,\boldsymbol{w})\in \mathcal{A}}\mathcal{J}_{\varepsilon}(m,\boldsymbol{w})=\mathcal{J}_{\varepsilon}(m_{\varepsilon},\boldsymbol{w}_{\varepsilon}).
\end{equation}
Moreover, $m_{\varepsilon}\in W^{1,p}(\Omega)$ for all $p>1$ and there exists $u_{\varepsilon}\in C^{2}(\Omega)$ and $\lambda_{\varepsilon}\in \mathbb R$ such that $(m_{\varepsilon},u_{\varepsilon},\lambda_{\varepsilon})$ solves system \eqref{regularizedgoalmodel} with
\begin{equation}\label{sameminimizer_11}
\boldsymbol{w}_{\varepsilon}=-C_{H}\gamma m_{\varepsilon}|\nabla u_{\varepsilon}|^{\gamma-2}\nabla u_{\varepsilon}, 
\end{equation}
and 
\begin{equation}\label{sameminimizer_22}
\Vert m_{\varepsilon}\Vert_{W^{1,\beta}(\Omega)}\leq C, \ \Vert m_{\varepsilon}\Vert_{L^{q_{\alpha}}(\Omega)}\leq C \ \text{and} \ \vert\lambda_{\varepsilon} \vert \leq C, \ \text{for some}\ C>0 \ \text{independent of}\ \varepsilon.
\end{equation} 
\end{lemma}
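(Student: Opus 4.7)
The plan is to proceed in four steps, mirroring the four assertions in the statement.

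\textbf{Step 1 (Transferring minimality from $\mathcal{E}_\varepsilon$ to $\mathcal{J}_\varepsilon$).} I would first isolate the bilinear structure of the interaction term. Setting $B(m_1,m_2):=\int_\Omega m_1\,(K_\alpha\ast m_2)\ast\eta_\varepsilon\,dx$ and $G(m):=B(m,m)$, symmetry of $K_\alpha$ and $\eta_\varepsilon$ together with Fubini make $B$ a symmetric bilinear form, and polarization yields the identity
\[
\mathcal{E}_\varepsilon(m,\boldsymbol{w}) = \mathcal{J}_\varepsilon(m,\boldsymbol{w}) + \tfrac{C_f}{2}G(m_\varepsilon) - \tfrac{C_f}{2}G(m-m_\varepsilon),
\]
where the last term is nonnegative by positivity of the mollified Riesz kernel. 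In the global-minimizer setting of Lemma \ref{globalminregularized} this gives \eqref{sameminimizer} at once. In the local setting of Lemma \ref{lobalcriticlemma}, the identity $e_{\bar r-\sigma}=e_{\bar r}$ forces $(m_\varepsilon,\boldsymbol{w}_\varepsilon)$ to lie strictly inside $\mathcal{B}_{\bar r}$; hence for every $(m,\boldsymbol{w})\in\mathcal{A}$ the segment $(m_s,\boldsymbol{w}_s):=(1-s)(m_\varepsilon,\boldsymbol{w}_\varepsilon)+s(m,\boldsymbol{w})$ stays in $\mathcal{B}_{\bar r}$ for $s>0$ small, so the identity above yields $\mathcal{J}_\varepsilon(m_s,\boldsymbol{w}_s)\geq \mathcal{J}_\varepsilon(m_\varepsilon,\boldsymbol{w}_\varepsilon)$. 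Joint convexity of $\mathcal{J}_\varepsilon$ (the kinetic term $(m,\boldsymbol{w})\mapsto \int m|\boldsymbol{w}/m|^{\gamma'}\,dx$ is jointly convex and the remainder is linear in $m$) then bootstraps this to \eqref{sameminimizer} on all of $\mathcal{A}$.

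\textbf{Step 2 (Producing $(u_\varepsilon,\lambda_\varepsilon)$ and identifying $\boldsymbol{w}_\varepsilon$).} Because $m_\varepsilon\in L^1(\Omega)$, the mollification $m_\varepsilon\ast \eta_\varepsilon$ is smooth and bounded, so by Lemma \ref{HolderforRiesz} the right-hand side $-C_f\,K_\alpha\ast m_\varepsilon\ast \eta_\varepsilon$ lies in $C^{0,\theta}(\overline\Omega)$. Lemma \ref{HJBexists} then produces unique $(\lambda_\varepsilon,u_\varepsilon)\in\mathbb{R}\times C^{2,\theta}(\overline\Omega)$ solving the Hamilton--Jacobi part of \eqref{regularizedgoalmodel}. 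For any $(m,\boldsymbol{w})\in\mathcal{A}$, the pointwise Legendre--Fenchel inequality
\[
mL(-\boldsymbol{w}/m)+mH(\nabla u_\varepsilon)\geq -\boldsymbol{w}\cdot\nabla u_\varepsilon,
\]
combined with testing the HJB equation against $m$ and using the weak Fokker--Planck constraint in $\mathcal{A}$ (after approximating $u_\varepsilon$ to an admissible test function using $\partial_n u_\varepsilon=0$), gives the duality bound $\mathcal{J}_\varepsilon(m,\boldsymbol{w})\geq \lambda_\varepsilon$. Constructing the candidate $(\tilde m_\varepsilon,\tilde{\boldsymbol{w}}_\varepsilon)$ by solving the Fokker--Planck equation with the $L^\infty$ drift $C_H\gamma|\nabla u_\varepsilon|^{\gamma-2}\nabla u_\varepsilon$ via Lemma \ref{regularity-mw1p} and setting $\tilde{\boldsymbol{w}}_\varepsilon:=-C_H\gamma\,\tilde m_\varepsilon|\nabla u_\varepsilon|^{\gamma-2}\nabla u_\varepsilon$, the Legendre--Fenchel inequality becomes a pointwise equality and a direct calculation yields $\mathcal{J}_\varepsilon(\tilde m_\varepsilon,\tilde{\boldsymbol{w}}_\varepsilon)=\lambda_\varepsilon$. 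Combined with \eqref{sameminimizer}, $\mathcal{J}_\varepsilon(m_\varepsilon,\boldsymbol{w}_\varepsilon)\leq \lambda_\varepsilon$, so the Legendre--Fenchel inequality must be an equality a.e.\ at $(m_\varepsilon,\boldsymbol{w}_\varepsilon)$, forcing \eqref{sameminimizer_11} (and $\boldsymbol{w}_\varepsilon=0$ on $\{m_\varepsilon=0\}$ by \eqref{general-Lagrangian}). Re-inserting this identification into the weak Fokker--Planck equation and invoking Lemma \ref{regularity-mw1p} again produces $m_\varepsilon\in W^{1,p}(\Omega)$ for every $p>1$ together with $m_\varepsilon>0$.

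\textbf{Step 3 (Uniform bounds).} In the local case $\|m_\varepsilon\|_{L^{q_\alpha}}\leq \bar r$ by construction. In the global case, testing against $(1,\boldsymbol{0})\in\mathcal{A}$ (admissible because $|\Omega|=1$) gives $\mathcal{E}_\varepsilon(m_\varepsilon,\boldsymbol{w}_\varepsilon)\leq 0$; the lower bounds in the proof of Lemma \ref{globalminregularized} (invoking the smallness $C_f<C_{TV1}$ when $\alpha=\alpha_{mc}$) then yield $\|m_\varepsilon\|_{L^{q_\alpha}}\leq C$. In either case Lemma \ref{lemma21-crucial} bounds $\|m_\varepsilon\|_{W^{1,\beta}}\leq C$, with $\Lambda_\varepsilon:=\int_\Omega m_\varepsilon|\boldsymbol{w}_\varepsilon/m_\varepsilon|^{\gamma'}\,dx$ uniformly controlled through the identity $\mathcal{E}_\varepsilon(m_\varepsilon,\boldsymbol{w}_\varepsilon)=C_L\Lambda_\varepsilon-\tfrac{C_f}{2}G(m_\varepsilon)$. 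Testing the HJB equation against $m_\varepsilon$ and using \eqref{sameminimizer_11} in the Fokker--Planck equation delivers
\[
\lambda_\varepsilon=C_H(\gamma-1)\int_\Omega m_\varepsilon|\nabla u_\varepsilon|^\gamma\,dx-C_f\int_\Omega m_\varepsilon(K_\alpha\ast m_\varepsilon\ast\eta_\varepsilon)\,dx,
\]
whose right-hand side is already uniformly bounded by the preceding estimates.

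\textbf{Main obstacle.} The subtlest step is the duality argument in Step 2. Deriving the bound $\mathcal{J}_\varepsilon\geq \lambda_\varepsilon$ requires using $u_\varepsilon$ as a test function in the weak Fokker--Planck constraint of $\mathcal{A}$, which is only prescribed against $C_c^\infty(\Omega)$; this gap is bridged by a density argument exploiting the Neumann condition $\partial_n u_\varepsilon=0$ together with $m_\varepsilon\in W^{1,\beta}$. Extracting pointwise equality in the Legendre--Fenchel inequality from the integrated equality further demands care on the possible vanishing set of $m_\varepsilon$, but this is cleaned up a posteriori by the positivity from Lemma \ref{regularity-mw1p}.
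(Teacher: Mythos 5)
Your Step 1 is, in spirit, the paper's argument (a convexity/first-variation transfer of minimality from $\mathcal{E}_{\varepsilon}$ to $\mathcal{J}_{\varepsilon}$, with the local case handled by noting that Lemma \ref{lobalcriticlemma} places $(m_{\varepsilon},\boldsymbol{w}_{\varepsilon})$ in $\mathcal{B}_{\bar r-\sigma}$ so that short segments stay admissible), and your Steps 2--3 supply in full the convex-duality and a-priori-bound arguments that the paper only cites from \cite{bernardini2022mass} and \cite{cesaroni2019introduction}; those steps are correct and standard. However, there is one genuine gap in Step 1: you discard the term $-\tfrac{C_f}{2}G(m-m_{\varepsilon})$ on the grounds that it is nonpositive ``by positivity of the mollified Riesz kernel.'' Pointwise positivity of the kernel $K_{\alpha}\ast\eta_{\varepsilon}$ only gives $G(h)\geq 0$ for $h\geq 0$; here $h=m-m_{\varepsilon}$ is signed, so what you actually need is positive-definiteness of the quadratic form, i.e.\ $\widehat{K_{\alpha}}\,\widehat{\eta_{\varepsilon}}\geq 0$. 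While $\widehat{K_{\alpha}}\geq 0$, the Fourier transform of a generic compactly supported standard mollifier changes sign, so $G(m-m_{\varepsilon})$ can be negative and the polarization identity alone does not yield \eqref{sameminimizer}.

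The gap is repairable in two ways, and the paper's proof implicitly uses the second. Either fix the mollifier to be of positive type (e.g.\ $\eta_{\varepsilon}=\zeta_{\varepsilon}\ast\zeta_{\varepsilon}$ with $\zeta_{\varepsilon}$ even), after which your identity closes the argument exactly as written; or avoid the sign question altogether by expanding $\mathcal{E}_{\varepsilon}$ along the segment $(m_{\mu},\boldsymbol{w}_{\mu})=(1-\mu)(m_{\varepsilon},\boldsymbol{w}_{\varepsilon})+\mu(m,\boldsymbol{w})$: the quadratic remainder $G(m-m_{\varepsilon})$ then enters only at order $\mu^{2}$, so dividing by $\mu$ and letting $\mu\to 0$ (using convexity of the kinetic term, as in \eqref{sameminimizer_3}--\eqref{sameminimizer_5}) gives the first-order inequality $\mathcal{J}_{\varepsilon}(m,\boldsymbol{w})\geq\mathcal{J}_{\varepsilon}(m_{\varepsilon},\boldsymbol{w}_{\varepsilon})$ regardless of the sign of that remainder. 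With either repair the rest of your proposal goes through; in particular your convexity bootstrap from the segment to all of $\mathcal{A}$ in the local case, the Legendre--Fenchel duality identifying $\boldsymbol{w}_{\varepsilon}$, and the derivation of the uniform bounds on $\|m_{\varepsilon}\|_{W^{1,\beta}}$, $\|m_{\varepsilon}\|_{L^{q_{\alpha}}}$ and $\lambda_{\varepsilon}$ are all sound.
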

\begin{proof}
For any $(m,\boldsymbol{w})\in \mathcal{A}$ and $\mu\in(0,1)$, we define
\begin{equation}\label{sameminimizer_1}
m_{\mu}:=(1-\mu)m_{\varepsilon}+\mu m \ \ \text{and}\ \ \boldsymbol{w}_{\mu}:=(1-\mu)\boldsymbol{w}_{\varepsilon}+\mu \boldsymbol{w}. 
\end{equation}
When $(m_{\varepsilon},\boldsymbol{w}_{\varepsilon})$ is a global minimizer of $\mathcal{E}_{\varepsilon}$, 
we obtain that
\begin{equation}\label{sameminimizer_2}
C_L\int_{\Omega}\Big|\frac{\boldsymbol{w}_{\mu}}{m_{\mu }}\Big|^{\gamma'}m_{\mu}\,dx-C_L\int_{\Omega}\Big|\frac{\boldsymbol{w}_{\varepsilon}}{m_{\varepsilon}}\Big|^{\gamma'}m_{\varepsilon}\,dx\geq\frac{C_{f}}{2}\int_{\Omega}m_{\mu}(K_{\alpha}\ast m_{\mu}\ast \eta_{\varepsilon})\,dx-\frac{C_{f}}{2}\int_{\Omega}m_{\varepsilon}(K_{\alpha}\ast m_{\varepsilon}\ast \eta_{\varepsilon})\,dx,
\end{equation}
and by the convexity of $\int_{\Omega}\Big|\frac{\boldsymbol{w}}{m}\Big|^{\gamma'}m\,dx$, one has the following inequality holds
\begin{equation}\label{sameminimizer_3}
\mu\left(C_L\int_{\Omega}\Big|\frac{\boldsymbol{w}}{m}\Big|^{\gamma'}m\,dx-C_L\int_{\Omega}\Big|\frac{\boldsymbol{w}_{\varepsilon}}{m_{\varepsilon}}\Big|^{\gamma'}m_{\varepsilon}\,dx\right)\geq C_L\int_{\Omega}\Big|\frac{\boldsymbol{w}_{\mu}}{m_{\mu }}\Big|^{\gamma'}m_{\mu}\,dx-C_L\int_{\Omega}\Big|\frac{\boldsymbol{w}_{\varepsilon}} {m_{\varepsilon}}\Big|^{\gamma'}m_{\varepsilon}\,dx.
\end{equation}
Combining \eqref{sameminimizer_2} with \eqref{sameminimizer_3}, one finds
\begin{equation}\label{sameminimizer_4}
\mu\left(C_L\int_{\Omega}\Big|\frac{\boldsymbol{w}}{m}\Big|^{\gamma'}m\,dx-C_L\int_{\Omega}\Big|\frac{\boldsymbol{w}_{\varepsilon}}{m_{\varepsilon}}\Big|^{\gamma'}m_{\varepsilon}\,dx\right)\geq\mu C_{f}\int_{\Omega}\left(m(x)-m_{\varepsilon}(x)\right)(K_{\alpha}\ast m_{\varepsilon}\ast \eta_{\varepsilon})\,dx+O(\mu^2)
\end{equation}
Then, we divide the both hand sides of (\ref{sameminimizer_4}) by $\mu$  and take the limit $\mu\to 0$ to get
\begin{equation}\label{sameminimizer_5}
C_L\int_{\Omega}\Big|\frac{\boldsymbol{w}}{m}\Big|^{\gamma'}m\,dx-C_L\int_{\Omega}\Big|\frac{\boldsymbol{w}_{\varepsilon}}{m_{\varepsilon}}\Big|^{\gamma'}m_{\varepsilon}\,dx\geq C_{f}\int_{\Omega}\left(m(x)-m_{\varepsilon}(x)\right)(K_{\alpha}\ast m_{\varepsilon})\ast \eta_{\varepsilon}\, dx,
\end{equation}
which, together with the arbitrariness
of $(m,\boldsymbol{w})\in \mathcal{A}$, implies that  $(m_{\varepsilon},\boldsymbol{w}_{\varepsilon})$ minimizes $\mathcal{J}_{\varepsilon}$ in $\mathcal{A}$.

When $(m_{\varepsilon},\boldsymbol{w}_{\varepsilon})$ is a local minimizer of $\mathcal{E}_{\varepsilon}$, thanks to Lemma \ref{lobalcriticlemma}, one has $m_{\varepsilon}\in \mathcal{B}_{\bar{r}-\sigma}$ for some $\sigma>0$ small enough. As a consequence, we obtain that $m_{\mu}\in \mathcal{B}_{\bar{r}-\sigma}$ for $\mu>0$ sufficiently small, where 
$m_{\mu}$ is given by \eqref{sameminimizer_1}. Then, we repeat the argument shown above for the global minimizer and obtain the desired conclusion. Finally, we follow the arguments shown in \cite[Proposition 3.4]{bernardini2022mass} and \cite[Theorem 4]{cesaroni2019introduction} with the minor modification to derive the estimates stated in this lemma and the details are omitted.
\end{proof}

With the aid of Lemma \ref{globalminregularized}, \ref{lobalcriticlemma} and \ref{regularizationsameminimizerl}, we are able to show the conclusions stated in Theorem \ref{thm11-optimal}.  Here the crucial step is to establish an a-priori $L^{\infty}$ estimate satisfied by $m_{\varepsilon}$.  With this, one can pass the limit in $\varepsilon$ and show the existence of a classical solution to the original problem (\ref{goalmodel}).   We mention that $\alpha_{sc}=n-2\gamma'$ is the Hardy-Littlewood-Sobolev critical exponent and when $\alpha_{sc}<\alpha<n$, we can follow the argument shown in \cite{cesaroni2018concentration, Cirant2024CVPDE}} to derive an a-priori bounded of $m_{\varepsilon}$ in $L^{\infty}$ by blow-up analysis.  However, when $\alpha=\alpha_{sc}$,  some restrictive conditions must be imposed on $C_{f}$ due to the worse regularity of the nonlinear coupling in (\ref{goalmodel}).  We summarize the $L^\infty$ estimates of $m_{\varepsilon}$ as follows 
\begin{lemma}\label{mepsinfty}
    Let $(m_{\varepsilon},u_{\varepsilon},\lambda_{\varepsilon})$ be a solution of \eqref{regularizedgoalmodel} obtained in Lemma \ref{regularizationsameminimizerl}. Assume that $\alpha_{sc}<\alpha<n$ or $\alpha=\alpha_{sc}>0$ with $C_{f}\leq \frac{\sigma}{\bar{r}C_{n,\gamma'}}$ for some $\sigma>0$ small enough and $C_{n,\gamma'}>0$ depending only on $n$ and $\gamma'$, then there exists a constant $C>0$ independent of $\varepsilon$ such that
\begin{equation}\label{mvarepsilonLinfty} 
\Vert m_{\varepsilon}\Vert_{L^{\infty}(\Omega)}\leq C.
\end{equation}
\end{lemma}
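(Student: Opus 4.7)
The plan is to treat the two ranges of $\alpha$ separately, since only the Sobolev-critical endpoint relies on the smallness assumption on $C_f$. For $\alpha_{sc}<\alpha<n$, I would adapt the blow-up/concentration argument of \cite{cesaroni2018concentration,Cirant2024CVPDE}: arguing by contradiction, assume $L_\varepsilon:=\|m_\varepsilon\|_{L^\infty(\Omega)}\to\infty$ along a subsequence and let $x_\varepsilon\in\overline{\Omega}$ be a corresponding maximum point. Rescale by $\tilde m_\varepsilon(y):=L_\varepsilon^{-1}m_\varepsilon(x_\varepsilon+\mu_\varepsilon y)$ with $\mu_\varepsilon\to 0$ chosen so that $\|\tilde m_\varepsilon\|_{L^{q_\alpha}}$ stays bounded, together with a compatible rescaling of $u_\varepsilon$ that keeps $-\Delta$ and $C_H|\nabla u|^\gamma$ in balance. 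The uniform controls $\|m_\varepsilon\|_{W^{1,\beta}}$, $\|m_\varepsilon\|_{L^{q_\alpha}}$ and $|\lambda_\varepsilon|$ from Lemma \ref{regularizationsameminimizerl}, together with the gradient estimates of Lemma \ref{HJBexists}--\ref{nablabarrier_1} and the Fokker--Planck regularity in Lemma \ref{regularity-mw1p}, allow one to extract locally uniform limits to an ergodic MFG system posed either on $\mathbb R^n$ (interior blow-up) or on a half-space with Neumann condition (boundary blow-up, after flattening $\partial\Omega$). Because $\alpha>\alpha_{sc}$, this limit system is strictly Hardy--Littlewood--Sobolev subcritical, so a Liouville-type rigidity result in the spirit of those proved in \cite{bernardini2023ergodic} forces $\tilde m\equiv 0$, contradicting $\tilde m(0)=1$.

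For $\alpha=\alpha_{sc}$ the blow-up limit is critical and Liouville fails, so I would instead close a direct bootstrap by exploiting smallness. The bound $\|m_\varepsilon\|_{L^{q_\alpha}}\leq\bar r$ from Lemma \ref{lobalcriticlemma}, combined with the Hardy--Littlewood--Sobolev inequality \eqref{eqHLS_1} applied with $t=q_\alpha$ and $\alpha=n-2\gamma'$, yields $s=n/\gamma'$ and
\begin{equation*}
\|K_\alpha\ast m_\varepsilon\|_{L^{n/\gamma'}(\Omega)}\leq C_{n,\gamma'}\|m_\varepsilon\|_{L^{q_\alpha}(\Omega)}\leq C_{n,\gamma'}\bar r.
\end{equation*}
Young's convolution inequality absorbs the mollifier without increasing this norm, so the hypothesis $C_f\leq\sigma/(\bar r C_{n,\gamma'})$ forces $\| -C_f(K_\alpha\ast m_\varepsilon\ast\eta_\varepsilon)\|_{L^{n/\gamma'}(\Omega)}\leq\sigma$. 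Picking $\sigma\leq\sigma_0$ from Lemma \ref{nablabarrier_1} makes the source term of the HJ equation in \eqref{regularizedgoalmodel} admissible there, giving $\|\nabla u_\varepsilon\|_{L^{n(\gamma-1)}(\Omega)}<M(\sigma)$, and hence $\|\boldsymbol{b}_\varepsilon\|_{L^n(\Omega)}\leq\zeta$ for the drift $\boldsymbol{b}_\varepsilon:=C_H\gamma|\nabla u_\varepsilon|^{\gamma-2}\nabla u_\varepsilon$, with $\zeta$ as small as desired.

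Once $\|\boldsymbol{b}_\varepsilon\|_{L^n}$ is small, a standard bootstrap closes: Lemma \ref{b-infty} upgrades $m_\varepsilon$ to be uniformly bounded in $W^{1,p}(\Omega)$ for every $p<n$, hence in $L^q(\Omega)$ for every finite $q$ by Sobolev embedding; Lemma \ref{HolderforRiesz} then yields $\|K_\alpha\ast m_\varepsilon\|_{L^\infty(\Omega)}\leq C$, so $f(m_\varepsilon)$ is uniformly H\"older bounded, Lemma \ref{HJBexists} returns $\|\nabla u_\varepsilon\|_{L^\infty}\leq C$, and a final application of Lemma \ref{regularity-mw1p} produces $\|m_\varepsilon\|_{W^{1,p}}\leq C$ for every $p>1$, which via Morrey's embedding yields the uniform bound \eqref{mvarepsilonLinfty}. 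The main obstacle is in the first case: one must handle the possibility that the blow-up centers accumulate on $\partial\Omega$, work with the Neumann problem on a half-space, and verify that the Choquard-coupled Liouville result still applies after the mollifier and the characteristic function of $\Omega$ disappear in the rescaled limit.
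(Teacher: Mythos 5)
Your treatment of the Sobolev-critical case $\alpha=\alpha_{sc}$ is essentially the paper's argument: Hardy--Littlewood--Sobolev with $t=q_{\alpha_{sc}}$ gives $s=n/\gamma'$, the smallness of $C_f$ makes the source of the Hamilton--Jacobi equation small in $L^{n/\gamma'}$, Lemma \ref{nablabarrier_1} then controls $\|\nabla u_\varepsilon\|_{L^{n(\gamma-1)}}$, Lemma \ref{b-infty} upgrades $m_\varepsilon$ to $W^{1,p}$ for $p<n$ and hence to $L^{p^*}$ with $p^*>q_{\alpha_{sc}}$, after which one concludes. Your closing bootstrap (Riesz potential in $L^\infty$ via Lemma \ref{HolderforRiesz}, then Lemma \ref{HJBexists} and Lemma \ref{regularity-mw1p}) is a legitimate, if slightly more direct, way to finish than the paper's ``replace $q_\alpha$ by $p^*$ and repeat.''

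The gap is in the subcritical case $\alpha_{sc}<\alpha<n$. You normalize the blow-up scale $\mu_\varepsilon$ so that $\|\tilde m_\varepsilon\|_{L^{q_\alpha}}$ \emph{stays bounded} and then appeal to ``a Liouville-type rigidity result in the spirit of \cite{bernardini2023ergodic}'' to force $\tilde m\equiv 0$. No such Liouville theorem for the Choquard-coupled ergodic system (on $\mathbb R^n$, let alone on a half-space with Neumann conditions) is established in the cited references, and you do not prove one; moreover your normalization removes the mechanism that actually yields the contradiction. The paper instead takes $\varrho_\varepsilon=M_\varepsilon^{-r}$ with $r$ in the window $\frac{1}{\gamma'}\bigl(1-\frac{q_\alpha}{\bar q}\bigr)<r<\frac{q_\alpha}{n}$ (nonempty precisely because $\alpha>\alpha_{sc}$): the lower bound guarantees, via the interpolation estimate $\|K_\alpha\ast m_\varepsilon\|_{L^\infty}\leq C_1+C_3M_\varepsilon^{1-q_\alpha/\bar q}$, that the rescaled coupling $\varrho_\varepsilon^{\gamma'}(K_\alpha\ast m_\varepsilon\ast\eta_\varepsilon)$ remains bounded in $L^\infty$, so that Lemma \ref{HJBexists} and Lemma \ref{regularity-mw1p-1} give local H\"older bounds on $\bar m_\varepsilon$ and hence $\bar m_\varepsilon\geq\delta$ on a fixed ball; the strict upper bound $r<q_\alpha/n$ then makes the rescaled local mass collapse,
\begin{equation*}
0<C\leq\int_{B_{\tilde R}(0)}\bar m_\varepsilon^{\,q_\alpha}\,dx\leq M_\varepsilon^{-q_\alpha+rn}\|m_\varepsilon\|_{L^{q_\alpha}(\Omega)}^{q_\alpha}\to 0,
\end{equation*}
which is the contradiction --- no limit PDE is extracted and no classification result is needed. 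The boundary accumulation you flag as the main obstacle is handled routinely by flattening and even reflection; the genuine crux is the existence of this window for $r$, i.e.\ the simultaneous boundedness of the rescaled coupling and vanishing of the rescaled $L^{q_\alpha}$ mass, and that step is absent from your argument.
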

\begin{proof}
We first consider the case of $\alpha_{sc}<\alpha<n$. To derive the $L^\infty$ estimate, we argue by contradiction and assume that 
$$
M_{\varepsilon}:=\max_{x\in \Omega}m_{\varepsilon}(x)=m_{\varepsilon}(x_{\varepsilon})\to +\infty.
$$ 
Noting that $\alpha>\alpha_{sc}$, 
let
\begin{equation}\label{eq3.4_0}
\varrho_{\varepsilon}:=M_{\varepsilon}^{-r},
\end{equation}
where $r>0$ (and thus $\varrho_{\varepsilon}\to 0$ as $\varepsilon\to 0$) to be determined later and $q_{\alpha}=\frac{2n}{n+\alpha}$. Obviously, we have $q_{\alpha}>\frac{n} {\alpha+\gamma'}$ due to $\alpha>\alpha_{sc}$. This ensures that the interval below is not empty and so we can fix $\bar q$ such that $$\bar q\in\bigg(\frac{n}{\alpha},\frac{nq_{\alpha}}{(n-\gamma'q_{\alpha})^+}\bigg),$$
where 
\begin{align*}
(n-\gamma'q_{\alpha})^+:=\left\{\begin{array}{ll}
n-\gamma'q_{\alpha},&\text{ if } n> \gamma'q_{\alpha},\\
0,&\text{ if } n\leq \gamma'q_{\alpha}.
\end{array}
\right.
\end{align*}
Now, we choose $r$ such that 
\begin{equation}\label{eq_r}
   \frac{1}{\gamma'}\left(1-\frac{q_{\alpha}}{\bar{q}} \right)< r<\frac{q_{\alpha}}{n}. 
\end{equation}
Moreover, we define
\begin{align}\label{eq3.4_1}
\left\{\begin{array}{ll}
\bar m_{\varepsilon}(x):=M_{\varepsilon}^{-1}m_{\varepsilon}(\varrho_{\varepsilon} x+ x_{\varepsilon}),\\
\bar u_{\varepsilon}(x):=\varrho_{\varepsilon}^{\frac{2-\gamma}{\gamma-1}} u_{\varepsilon} (\varrho_{\varepsilon}x+ x_{\varepsilon}).
\end{array}
\right.
\end{align}
It is easy to verify that $\bar m_{\varepsilon}(0)=1$ and $0\leq \bar m_{\varepsilon}(x)\leq 1$. Upon substituting \eqref{eq3.4_1} to \eqref{regularizedgoalmodel}, one has
\begin{align}\label{eq3.4_2}
\left\{\begin{array}{ll}
-\Delta \bar{u}_{\varepsilon}+C_H|\nabla \bar{u}_{\varepsilon}|^{\gamma}+\varrho_{\varepsilon}^{\gamma'}\lambda_{\varepsilon}=-C_{f}\varrho_{\varepsilon}^{\gamma'}\left( K_{\alpha}\ast {m}_{\varepsilon} \ast \eta_{{\varepsilon}}\right)(x_{\varepsilon}+\varrho_{\varepsilon} x),\  &\text{in} \ \Omega_{\varepsilon},\\
\vspace{0.5ex}
\Delta \bar{m}_{\varepsilon}+C_{H}\gamma\nabla\cdot (\bar{m}_{\varepsilon}|\nabla \bar{u}_{\varepsilon}|^{\gamma-2}\nabla \bar{u}_{\varepsilon})=0,\  &\text{in} \ \Omega_{\varepsilon},\\
\vspace{0.5ex}
\frac{\partial \bar{u}_{\varepsilon}}{\partial\boldsymbol{n}}=0,\ \frac{\partial \bar{m}_{\varepsilon}}{\partial\boldsymbol{n}}+ C_{H}\gamma \bar{m}_{\varepsilon}|\nabla \bar{u}_{\varepsilon}|^{\gamma-2}\nabla \bar{u}_{\varepsilon} \cdot\boldsymbol{n} =0, \  &\text{on} \ \partial\Omega_{\varepsilon},\\
\vspace{0.5ex}
\int_{\Omega_{\varepsilon}}\bar{m}_{\varepsilon}\,=M_{\varepsilon}^{-1}\varrho_{\varepsilon}^{-n},\ \ \int_{\Omega_{\varepsilon}}\bar{u}_{\varepsilon}\,=0,
\end{array}
\right.
\end{align}
where $\Omega_{\varepsilon}:=\Big\{x:\varrho_{\varepsilon}x+x_{\varepsilon}\in \Omega\Big\}$. In light of Lemma \ref{regularizationsameminimizerl}, we obtain that 
\begin{equation}\label{eq3.4_3}
   \big{\vert} \varrho_{\varepsilon}^{\gamma'}\lambda_{\varepsilon}\big{\vert}\leq C, \ \text{for some}\ C>0. 
\end{equation}

We apply \eqref{InfityRiesz} with $s=1$ and $t=\bar q$, 
then Sobolev interpolation inequality with $\theta_0=\frac{q_{\alpha}}{\bar{q}}\in (0,1)$ to obtain that there exist positive constants $C_1,$ $C_2$ and $C_3$ such that
\begin{align}\label{eq3.4_4}
  \Vert K_{\alpha}\ast m_{\varepsilon}\Vert_{L^{\infty}(\Omega)}
    &\leq C_1 \Vert m_{\varepsilon}\Vert_{L^{1}(\Omega)} +C_2\Vert m_{\varepsilon}\Vert_{L^{\bar q}(\Omega)} \nonumber \\
   &\leq C_1 +C_2\left(  \Vert m_{\varepsilon}\Vert^{\theta_0}_{L^{q_{\alpha}}(\Omega)}\Vert m_{\varepsilon}\Vert^{1-\theta_0}_{L^{\infty}(\Omega)}  \right)  \\
   &\leq C_{1}+C_{3}M_{\varepsilon}^{1-\frac{q_{\alpha}}{\bar{q}}},\nonumber
\end{align}
where we have used \eqref{sameminimizer_22} and the fact that $q_{\alpha}<\bar{q}<\infty$ for every $\alpha<n$. Since $m_{\varepsilon}(x)\leq M_{\varepsilon}$ for any $x\in \Omega$, we get from Young's inequality for convolution, the properties of mollifiers and \eqref{eq3.4_4} that 
\begin{align}\label{eq3.4_4_0}
\varrho_{\varepsilon}^{\gamma'}\Vert\left( K_{\alpha}\ast {m}_{\varepsilon} \ast \eta_{{\varepsilon}}\right)(x_{\varepsilon}+\varrho_{\varepsilon} x)\Vert_{L^\infty(\Omega_{\varepsilon})}=&\varrho_{\varepsilon}^{\gamma'}\Vert K_{\alpha}\ast m_{\varepsilon} \ast \eta_{\varepsilon}\Vert_{L^{\infty}(\Omega)} \nonumber\\
\leq& \varrho_{\varepsilon}^{\gamma'}\Vert K_{\alpha}\ast m_{\varepsilon}\Vert_{L^{\infty}(\Omega)}\\
 \leq& C_1 \varrho_{\varepsilon}^{\gamma'}+C_3 \varrho_{\varepsilon}^{\gamma'} M_{\varepsilon}^{1-\frac{q_{\alpha}}{\bar{q}}}=C_1 \varrho_{\varepsilon}^{\gamma'}+C_3  M_{\varepsilon}^{1-\frac{q_{\alpha}}{\bar{q}}-r\gamma'} \leq C, \nonumber
\end{align} 
where the last inequality holds since $1-\frac{q_{\alpha}}{\bar{q}}< r\gamma'$, which follows from \eqref{eq3.4_0}.


Next, we decompose our arguments into the following two cases: 

\textbf{Case 1:}  Assume that 
\begin{equation}\label{eq3.4_6}
\lim_{\varepsilon\to 0}\frac{d (x_{\varepsilon},\partial\Omega)}{\varrho_{\varepsilon}}=+\infty.
\end{equation}
Then, thanks to \eqref{eq3.4_6}, one has $\Omega_{\varepsilon}\nearrow \mathbb R^n$ as $\varepsilon\to 0$.  Thus, there exists $R>0$ independent of $\varepsilon$ such that $B_{4R}(0)\subset \Omega_{\varepsilon}$ for $\varepsilon>0$ small sufficiently. 
Moreover, by using \eqref{eq3.4_4_0}, we invoke Lemma \ref{HJBexists} to obtain that there exists a constant $\tilde{C}>0$ independent of $\varepsilon$ such  that $$\Vert\nabla\bar{u}_{\varepsilon}\Vert_{L^{\infty}(B_{2R}(0))}\leq \tilde{C}.$$
This, together with $0\leq \bar m_{\varepsilon}(x)\leq 1$, yields that 
$$\Vert \bar{m}_{\varepsilon}|\nabla \bar{u}_{\varepsilon}|^{\gamma-2}\nabla \bar{u}_{\varepsilon} \Vert_{L^{\infty}(B_{2R}(0))}\leq \Vert\nabla\bar{u}_{\varepsilon}\Vert^{\gamma'-1}_{L^{\infty}(B_{2R}(0))}\leq {\tilde{C}}^{\gamma'-1}.$$ 
Invoking Lemma \ref{regularity-mw1p-1}, we then obtain that, for $\varepsilon>0$ small enough, $\bar{m}_{\varepsilon}\in W^{1,p}(B_{R}(0))$ for all $p>1$. By choosing $p>n$ and using the Sobolev embedding theorem, one finds $\bar{m}_{\varepsilon}\in C^{0,\theta}(B_{R}(0))$ for some $\theta\in (0,1)$. Since $\bar{m}_{\varepsilon}(0)=1$, by using the  H{\"o}lder's continuity of $\bar{m}_{\varepsilon}$, we have
\begin{equation}\label{eq3.4_7}
  \bar{m}_{\varepsilon}(x)\geq \delta>0\ \text{in}\ \ B_{\tilde{R}}(0),  
\end{equation}
for some $\delta>0$ and $\tilde{R}<R$. Now, 
we reach a contradiction as follows
\begin{equation}\label{eq3.4_9}
 0<C\leq\int_{B_{\tilde{R}}(0)}{\bar{m}_{\varepsilon}^{q_{\alpha}}(x)}\, dx \leq  M_{\varepsilon}^{-q_{\alpha}+rn}\Vert m_{\varepsilon}\Vert_{L^{q_{\alpha}}(\Omega)}^{q_{\alpha}}\to 0, \ \text{as}\ \varepsilon\to 0, 
\end{equation}
where we have used the fact that $-q_{\alpha}+rn<0$ for all $\alpha_{sc}<\alpha<n$ due to \eqref{eq_r}. 

 \textbf{Case 2:} Assume that there exists a constant $C>0$ independent of $\varepsilon$ such that 
 \begin{equation}\label{eq3.4_92}
 \lim_{\varepsilon\to 0}\frac{d (x_{\varepsilon},\partial\Omega)}{\varrho_{\varepsilon}}\leq C<+\infty.
 \end{equation}
 Then, up to subsequence, we may assume that $x_{\varepsilon}\to x_{0}\in \partial\Omega$ as $\varepsilon \to 0$. Without loss of generality, we may suppose $x_{0}=0\in \partial\Omega$ and $\boldsymbol{n}(0)=-e_n$. Let $x'=(x_1,x_2,\cdots,x_{n-1})\in \mathbb R^{n-1}$. Since $\partial\Omega$ is of class $C^{3}$, one has there exists $U\in \mathbb R^{n}$, $\Gamma\in \mathbb R^{n-1}$ and $\phi(x')\in C^{2,\theta}(\Gamma)$ for some $\theta\in(0,1)$ such that
\begin{equation}\label{straighten-1}
\phi(0)=0,\ \ \ \ \nabla\phi(0)=0,
\end{equation}
\begin{equation}\label{straighten-2}
\partial\Omega\cap U=\left\{(x',x_{n}):x_{n}=\phi(x')\right\} \ \text{and}\ \ \Omega\cap U=\left\{(x',x_{n}):x_{n}>\phi(x')\right\}.
\end{equation}
Then, we locally straighten the boundary with a
diffeomorphism $\Phi: \mathbb R^{n}\to \mathbb R^{n}$ and define 
\begin{align}\label{straighten-3}
y_{i}=\Phi^{i}(x)=\left\{\begin{array}{ll}
x_{i}-x_{n}\frac{\partial\phi}{\partial x_{i}}(x'),\ \ \text{for}\ \ 1\leq i\leq n-1, \\
\vspace{0.5ex}
x_{n}-\phi(x'),\ \ \ \ \  \ \  \text{for}\ \ i=n.
\end{array}
\right.
\end{align}
It is straightforward to show that $\Phi$ is invertible in a neighborhood of $0\in\partial \Omega$. Next, we extend the solution by introducing the following even reflection 
\begin{equation}\label{straighten-4}
   \tilde{m}_{\varepsilon}(y)= \bar m_{\varepsilon}\left(\frac{\Phi^{-1}(y',|y_{n}|)-x_{\varepsilon}}{\varrho_{\varepsilon}}\right),
\end{equation}
and
\begin{equation}\label{straighten-5}
    \tilde{u}_{\varepsilon}(y)=\bar u_{\varepsilon}\left(\frac{\Phi^{-1}(y',|y_{n}|)-x_{\varepsilon}}{\varrho_{\varepsilon}}\right),
\end{equation}
where $y'=(y_1,y_2,\cdots,y_{n-1})\in \mathbb R^{n-1}$. One can deduce from the Neumann boundary conditions that $\frac{\partial\tilde{m}_{\varepsilon}}{\partial y_{n}}\Big {|}_{\{y_n=0\}}=0$.  Moreover, by a direct calculation, we obtain that $\tilde{m}_{\varepsilon}$ and $\tilde{u}_{\varepsilon}$ satisfy equation \eqref{eq3.4_2} in a fixed neighborhood of the boundary point that independent of $\varepsilon$.  Then, we proceed with the same argument shown in Case 1 and obtain a contradiction.

Now, we focus on the case of $\alpha=\alpha_{sc}>0$. By using the properties of convolution and \eqref{H-L-S}, one has 
\begin{equation}\label{sobolevcritical}
    C_{f}\Vert K_{\alpha_{sc}}\ast m_{\varepsilon}\ast \eta_{\varepsilon}\Vert_{L^{\frac{n}{\gamma'}}(\Omega)}\leq C_{f}\Vert K_{\alpha_{sc}}\ast m_{\varepsilon}\Vert_{L^{\frac{n}{\gamma'}}(\Omega)}\leq  C_{f}C_{n,\gamma'} \Vert m_{\varepsilon}\Vert_{L^{\frac{n}{n-\gamma'}}(\Omega)},
\end{equation}
where $C_{n,\gamma'}>0$ is a constant depending only on $n$ and $\gamma'$. Thanks to Lemma \ref{lobalcriticlemma}, we further obtain 
\begin{equation}\label{sobolevcritical_1}
    \|m_{\varepsilon}\|_{L^{q_{\alpha_{sc}}}(\Omega)}=\|m_{\varepsilon}\|_{L^{\frac{n}{n-\gamma'}}(\Omega)}\leq \bar{r}=C_{\alpha}+1,
\end{equation}
which, together with \eqref{sobolevcritical}, implies that
\begin{equation}\label{sobolevcritical_2}
     C_{f}\Vert K_{\alpha_{sc}}\ast m_{\varepsilon}\ast \eta_{\varepsilon}\Vert_{L^{\frac{n}{\gamma'}}(\Omega)}\leq  C_{f} C_{n,\gamma'} \bar{r}.
\end{equation}
Thus, one finds that $ C_{f} \Vert K_{\alpha_{sc}}\ast m_{\varepsilon} \ast \eta_{\varepsilon}\Vert_{L^{\frac{n}{\gamma'}}(\Omega)}\leq \sigma$  whenever $ C_{f}\leq \frac{\sigma}{\bar{r}C_{n,\gamma'}}$ for some sufficiently small $\sigma>0$ . Furthermore, by applying Lemma \ref{nablabarrier_1}, we can obtain
 \begin{equation}\label{sobolevcritical_3}
        \|\nabla u_{\varepsilon}\|_{L^{n(\gamma-1)}(\Omega)}\leq \left(\frac{\zeta}{\gamma C_{H}}\right)^{\frac{1}{\gamma-1}},
\end{equation}
where $C_{\Lambda}$ is given in \eqref{2.6-2} and $\zeta$ is shown in Lemma \ref{b-infty}.  Therefore, one gets
 \begin{equation}\label{sobolevcritical_4}
    \Vert C_{H}\gamma |\nabla u_{\varepsilon}|^{\gamma-2}\nabla u_{\varepsilon} \Vert_{L^{n}(\Omega)}\leq \zeta. 
\end{equation}
Invoking Lemma \ref{b-infty}, we obtain that $m_{\varepsilon}$ is uniformly bounded in $W^{1,p}(\Omega)$ for any $p<n$ and thus $m_{\varepsilon}$ is uniformly bounded in $L^{p^{\ast}}(\Omega)$. Here $p^{\ast}=\frac{n p}{n-p}$ is chosen such that $p^{\ast}>q_{\alpha_{sc}}=\frac{n}{n-\gamma'}$. From this, by replacing $q_{\alpha}$ with $p^\ast$ and repeating above arguments, one can show $m_{\varepsilon}$ is uniformly bounded in $ L^{\infty}(\Omega)$. 
\end{proof} 
 Lemma \ref{mepsinfty} demonstrates that when $\alpha=\alpha_{sc}$, the uniform boundedness of $m_{\varepsilon}$ in $L^\infty$ is established only when $C_f$ is small enough since it is necessary to use Lemma \ref{b-infty} to find  uniform bounds for $m_{\varepsilon}$ in some $L^{p}$ with $p>q_{\alpha_{sc}}$.  Whereas, when $\alpha>\alpha_{sc}$,
the uniform boundedness of $m_{\varepsilon}$ in general holds.  Now, we are ready to prove Theorem \ref{thm11-optimal}, which is
 
\medskip

\noindent \textbf{Proof of Theorem \ref{thm11-optimal}:}
Let $(u_{\varepsilon},m_{\varepsilon},\lambda_{\varepsilon})$ be the solutions of \eqref{regularizedgoalmodel} obtained in Lemma \ref{regularizationsameminimizerl}.  Namely, $(u_{\varepsilon},m_{\varepsilon},\lambda_{\varepsilon})$ satisfies 
 \begin{align}\label{proofTHM_1}
\left\{\begin{array}{ll}
-\Delta u_{\varepsilon}+C_H|\nabla u_{\varepsilon}|^{\gamma}+\lambda_{\varepsilon}=- C_{f}\left( K_{\alpha}\ast m_{\varepsilon} \ast \eta_{\varepsilon}\right),\  &\text{in} \ \Omega,\\
\vspace{0.5ex}
\Delta m+C_{H}\gamma\nabla\cdot (m_{\varepsilon}|\nabla u_{\varepsilon}|^{\gamma-2}\nabla u_{\varepsilon})=0,\  &\text{in} \ \Omega,\\
\vspace{0.5ex}
\frac{\partial u_{\varepsilon}}{\partial\boldsymbol{n}}=0,\ \frac{\partial m_{\varepsilon}}{\partial\boldsymbol{n}}+ C_{H}\gamma m_{\varepsilon}|\nabla u_{\varepsilon}|^{\gamma-2}\nabla u_{\varepsilon} \cdot\boldsymbol{n} =0, \ &\text{on} \ \partial\Omega,\\
\vspace{0.5ex}
 \int_{\Omega}m_{\varepsilon}\,dx=1,\ \ \int_{\Omega}u_{\varepsilon}\,dx=0.
\end{array}
\right.
\end{align}
We shall show that, up to subsequence, $\lambda_{\varepsilon}\to \lambda$ in $\mathbb R$, $u_{\varepsilon}\to u$ uniformly in $C^{2}(\overline{\Omega})$ and $m_{\varepsilon}\to m$ in $W^{1,p}(\Omega)$ for every $p>1$, where $(u,m,\lambda)$ is a classical solution of system \eqref{goalmodel}.  In the subsequent analysis, we shall tackle Sobolev subcritical and critical cases, respectively. 

First of all, we consider the case of $\alpha_{sc}<\alpha<n$. Noting the boundedness of $\lambda_\varepsilon$ obtained in \eqref{sameminimizer_2}, we have that, up to a subsequence, $\lambda_\varepsilon \to \lambda$ as $\varepsilon\to 0$. Moreover,
invoking Lemma \ref{mepsinfty}, the properties of convolution and inequality \eqref{InfityRiesz}, we conclude that 
\begin{equation}\label{proofTHM_2}
  \begin{aligned}
\Vert C_{f}\left( K_{\alpha}\ast m_{\varepsilon} \ast \eta_{\varepsilon}\right) \Vert_{L^{\infty}(\Omega)}&\leq  C_{f} \Vert K_{\alpha}\ast m_{\varepsilon}\Vert_{L^{\infty}(\Omega)}\nonumber\\ 
&\leq C \Vert m_{\varepsilon}\Vert_{L^{1}(\Omega)} +C \Vert m_{\varepsilon}\Vert_{L^{\infty}(\Omega)} \\ 
&\leq C+C \Vert m_{\varepsilon}\Vert_{L^{\infty}(\Omega)}\leq C, \nonumber
\end{aligned} 
\end{equation}
for some $C>0$. With the aid of the  $L^{\infty}$ boundedness of $m_{\varepsilon}$  and \eqref{holderRiesz_1}, one further gets $K_{\alpha}\ast m_{\varepsilon}$ in $C^{0,\theta}$ for some $\theta\in(0,1)$. Therefore, by Lemma \ref{HJBexists}, we obtain that 
\begin{equation}\label{proofTHM_2_1}
\Vert\nabla u_{\varepsilon}\Vert_{L^{\infty}(\Omega)}\leq C_{1} \ \text{for some} \ \ C_{1}>0 \ \text{independent of}\
\varepsilon>0. 
\end{equation}
Now, we rewrite the Hamilton-Jacobi equation of \eqref{proofTHM_1} as
\begin{equation}\label{proofTHM_3}
    -\Delta u_{\varepsilon}=-C_H|\nabla u_{\varepsilon}|^{\gamma}+h_{\varepsilon}(x)\ \ \text{with}\ \ h_{\varepsilon}(x):=-\lambda_{\varepsilon}-C_{f}\left( K_{\alpha}\ast m_{\varepsilon} \ast \eta_{\varepsilon}\right),\  x\in \Omega.
\end{equation}
By the  boundedness of $m_{\varepsilon}$ in $L^{\infty}(\Omega)$ and \eqref{proofTHM_2_1}, we deduce that $h_{\varepsilon}\in L^{p}(\Omega)$ for all $p>1$. We then apply the standard $W^{2,p}$ estimate into the Hamilton-Jacobi equation and obtain that $\Vert u_{\varepsilon}\Vert_{W^{2,p}(\Omega)}\leq C$ for all $p>1$. This, together with Sobolev embeddings, implies that $u_{\varepsilon}$ are equibounded in  $C^{1,\theta}(\overline{\Omega})$ for some $\theta\in (0,1)$. In addition, since $m_{\varepsilon}\in L^{\infty}(\Omega)$, we can choose a suitable $\tilde{q}>\frac{n}{\alpha}$ such that $m_{\varepsilon}\ast \eta_{\varepsilon}\in L^{\tilde{q}}(\Omega)$. Then, by using Lemma \ref{HolderforRiesz}, 
we find that $C_{f}\left( K_{\alpha}\ast m_{\varepsilon} \ast \eta_{\varepsilon}\right)\in C^{0,\theta}(\overline{\Omega})$ for some $\theta\in (0,1)$.  Moreover, with the aid of the classical Schauder's estimates, one gets from \eqref{proofTHM_3} that $u_{\varepsilon}$ are equibounded in $C^{2,\theta}(\overline{\Omega})$ for some $\theta\in(0,1)$. Thus, by Arzel{\'a}-Ascoli theorem, we can obtain that, up to a subsequence, $u_{\varepsilon}\to u$ in $C^{2}(\overline{\Omega})$.  Now, we focus on the Fokker-Planck equation in \eqref{proofTHM_1}.  Noting that
\begin{equation}\label{proofTHM_4}
    \Vert C_{H}\gamma |\nabla u_{\varepsilon}|^{\gamma-2}\nabla u_{\varepsilon} \Vert_{L^{\infty}(\Omega)}\leq C,
\end{equation}
one obtains from Lemma \ref{regularity-mw1p} 
that $\Vert m_{\varepsilon}\Vert_{W^{1,p}(\Omega)}\leq C$ for all $p>1$, which combines with Sobolev embedding theorem, implies that $m_{\varepsilon}\in C^{0,\theta}(\overline{\Omega})$ uniformly for some $\theta\in (0,1)$. Hence, we get that, up to a subsequence, $m_{\varepsilon}\to m$ in $W^{1,p}(\Omega)$ for all
$p > 1$. 
Thus, by the bootstrapping argument,we have that $(u,m,\lambda)$ is a classical solution to system \eqref{goalmodel}. Meanwhile,  by using Lemma \ref{regularity-mw1p}, we obtain the positivity of $m$.

Next, we consider the Sobolev-critical case, i.e. $\alpha=\alpha_{sc}$.  By repeating the above arguments, we apply Lemma \ref{mepsinfty} to show the desired conclusion. 

It remains to prove that the solutions we obtained above are minimizers of $\mathcal{E}$. To this end,  we follow the argument shown in \cite{BraidesAndrea} to derive the $\Gamma$-convergence of the regularized energy functional.  We mention that it is not necessary to derive the equicoercivity property of $\mathcal{E}_{\varepsilon}$ since we know a-priori that the sequence of minima converges.  Denoting $X: = L^{q_{\alpha}}(\Omega)\cap W^{1,\beta}(\Omega)\times L^{1}(\Omega)$, we aim to prove that $\mathcal{E}_{\varepsilon}$ $\Gamma$-converges to $\mathcal{E}$ in $X$.  Assume that $(m_{\varepsilon},\boldsymbol{w}_{\varepsilon})\to (m,\boldsymbol{w})$ in $X$, then it follows from the properties of mollifiers that $m_{\varepsilon}\ast \eta_{\varepsilon}\to m$ in $L^{q_{\alpha}}(\Omega)$. Since $\int_{\Omega}\Big|\frac{\boldsymbol{w}}{m}\Big|^{\gamma'}m\,dx$ is semi-continuous and
\begin{equation}\label{proofTHM_4_1}
\lim\limits_{\varepsilon \to 0}\frac{1}{2}\int_{\Omega}C_{f}\bigg\{m_{\varepsilon}(K_{\alpha}\ast m_{\varepsilon})\bigg\}\ast \eta_{\varepsilon}\,dx= \frac{1}{2}\int_{\Omega}C_{f}\bigg\{m(K_{\alpha}\ast m)\bigg\}\, dx,
\end{equation}
we have
\begin{align}\label{proofTHM_41}
\liminf\limits_{\varepsilon \to 0} \mathcal{E}_{\varepsilon}(m_{\varepsilon},\boldsymbol{w}_{\varepsilon})&=\liminf\limits_{\varepsilon \to 0} C_L\int_{\Omega}\Big|\frac{\boldsymbol{w}_{\varepsilon}}{m_{\varepsilon}}\Big|^{\gamma'}m_{\varepsilon}\,dx-\liminf\limits_{\varepsilon \to 0}\frac{1}{2}\int_{\Omega}C_{f}\bigg\{m_{\varepsilon}(K_{\alpha}\ast m_{\varepsilon})\bigg\}\ast \eta_{\varepsilon}\,dx\nonumber\\ 
&\geq C_L\int_{\Omega}\Big|\frac{\boldsymbol{w}}{m}\Big|^{\gamma'}m\,dx-\frac{1}{2}\int_{\Omega}C_{f}\bigg\{m(K_{\alpha}\ast m)\bigg\}\,dx=\mathcal{E}(m,\boldsymbol{w}).
\end{align} 
On the other hand, we get $\mathcal{E}_{\varepsilon}(m_{\varepsilon},\boldsymbol{w}_{\varepsilon})\leq \mathcal{E}_{\varepsilon}(m,\boldsymbol{w})$ by using the fact that $(m_{\varepsilon},\boldsymbol{w}_{\varepsilon})$ is the minimizer of $\mathcal{E}_{\varepsilon}$.  Thanks to \eqref{proofTHM_4_1}, one also has 
$$\mathcal{E}(m,\boldsymbol{w})=\limsup_{\varepsilon\to 0}\mathcal{E}_{\varepsilon}(m,\boldsymbol{w})\geq \limsup_{\varepsilon\to 0} \mathcal{E}_{\varepsilon}(m_{\varepsilon},\boldsymbol{w}_{\varepsilon}).$$
This, together with \eqref{proofTHM_41}, implies that   
$\mathcal{E}_{\varepsilon}(m_{\varepsilon},\boldsymbol{w}_{\varepsilon})\to \mathcal{E}(m,\boldsymbol{w})$.  Finally,  by invoking Lemma \ref{regularizationsameminimizerl}, one finds a minimum $(m_{\varepsilon},\boldsymbol{w}_{\varepsilon})$ of $\mathcal{E}_{\varepsilon}$ yields a solution $(u_{\varepsilon},m_{\varepsilon},\lambda_{\varepsilon})$  of \eqref{regularizedgoalmodel} with $\boldsymbol{w}_{\varepsilon}=-m_{\varepsilon}C_{H}\gamma \vert\nabla u_{\varepsilon}\vert^{\gamma-2}\nabla u_{\varepsilon}$.  In summary, we obtain that the sequence of solutions
$(u_{\varepsilon},m_{\varepsilon},\lambda_{\varepsilon})$ converges in $C^{2}(\overline{\Omega}) \times W^{1,p}(\Omega)\times \mathbb R$ for all $p\geq 1$ to a solution $(u, m, \lambda)$ of the original problem \eqref{goalmodel}. 
 Correspondingly, $(m_{\varepsilon},\boldsymbol{w}_{\varepsilon})$ converges to $(m, -mC_{H}\gamma \vert\nabla u\vert^{\gamma-2}\nabla u)$ in $X$.  Thus,  $(u,m,\lambda)$ solves \eqref{goalmodel} and $(m, -mC_{H}\gamma \vert\nabla u\vert^{\gamma-2}\nabla u)$ is a minimum of $\mathcal{E}$.  This finishes the proof of Theorem \ref{thm11-optimal}. 
$\hfill\qed$  

\section{Discussion}

In this paper, we mainly employ the variational structure to investigate the existence of solutions to the ergodic Mean Field Games system \eqref{goalmodel} with strong focusing nonlocal coupling.  More precisely, concerning the Choquard Coupling, one of typical non-local couplings, we classify the existence of minimizers and obtain that there exist global minimizers in the mass-subcritical and mass-critical cases and local minimizers when the exponent is above the mass-critical one and below or equal to the Sobolev-critical one.  To find the minimizers of $\mathcal{E}$, we introduced a family of regularized energy functionals $\mathcal{E}_{\varepsilon}$, where non-local couplings are smoothed by the standard mollifiers.  Then, we are able to establish the existence of minimizers to regularized problems.  We mention that while showing the existence of local minimizers to $\mathcal{E}$ in the Sobolev-critical case, we imposed smallness conditions on $C_{f}$ to guarantee the regularity of the $m$-component.  Here the Hardy-Littlewood-Sobolev inequality is crucial for controlling the nonlinear coupling in (\ref{goalmodel}).  With the existence of minimizers, we invoked the classical convex duality theory to obtain the existence of classical solutions to regularized Mean-field Game systems. Finally, we derived the uniform $L^\infty$ boundedness of $m$ obtained in regularized Mean-field Game systems via the blow-up argument and took the limit to obtain the existence of solutions to the original problem (\ref{goalmodel}).


There are also some interesting problems that deserve exploration in the future.  It is intriguing to extend our results in the bounded domain into the whole space, where the exponent lies between the mass-supercritical and Sobolev-critical ranges.  In addition, the analysis of solution properties, such as stability, uniqueness, and symmetries, derived from the variational method presents significant challenges, yet remains highly intriguing.


\begin{appendices}
\setcounter{equation}{0}
\renewcommand\theequation{A.\arabic{equation}}
\end{appendices}

\section*{Acknowledgments}

Xiaoyu Zeng is supported by NSFC (Grant Nos. 12322106, 12171379, 12271417). 


\bibliographystyle{abbrv}
\bibliography{ref}

\end{document}